\newcommand{\Z}{\mathbb Z}
\newcommand{\Tr}{\mathrm{Tr}}
\newcommand{\fqn}{\mathbb{F}_{q^n}}
\newcommand{\F}{\mathbb{F}}
\newcommand{\fq}{\mathbb{F}_q}
\newcommand{\p}{\mathcal{P}}
\newtheorem{theorem}{Theorem}[section]
\newtheorem{proposition}[theorem]{Proposition}
\newtheorem{definition}[theorem]{Definition}
\newtheorem{lemma}[theorem]{Lemma}
\newtheorem{corollary}[theorem]{Corollary}
\newtheorem{example}[theorem]{Example}
\newtheorem{remark}[theorem]{Remark}
 \author[F. E. Brochero Mart\'inez] {F.E. Brochero Mart\'{i}nez}
\address{Departamento de Matem\'{a}tica,
UNIVERSIDADE FEDERAL DE MINAS GERAIS,
UFMG,
Belo Horizonte MG (Brazil)}
\email{fbrocher@mat.ufmg.br}
\author{Daniela  Oliveira}
\address{Instituto de Ciências Matemáticas e de Computação, UNIVERSIDADE DE SÃO PAULO (USP)
São Carlos (Brazil)
The second author was supported by FAPESP 2022/14004-7, Brazil.}
\email{danielaalvesoliveira@gmail.com}
\keywords{Artin-Schreier hypersurfaces, Weil's bound, finite fields, bilinear forms}
\subjclass[2020]{Primary 12E20 Secondary 11T24}
\title[On the number of rational points  of Artin-Schreier's curves $\dots$]
{On the number of rational points  of Artin-Schreier's curves and hypersurfaces}
\begin{document} 

\maketitle 
\begin{abstract}
Let $\mathbb{F}_{q^n}$ represent the finite field with $q^n$ elements. In this  paper, our focus is on determining the number of $\mathbb{F}_{q^n}$-rational points for two specific objects: an affine Artin-Schreier curve given by the equation $y^q-y = x(x^{q^i}-x)-\lambda$, and an Artin-Schreier hypersurface given by the equation $y^q-y=\sum_{j=1}^r a_jx_j(x_j^{q^{i_j}}-x_j)-\lambda$.

Additionally, we establish that the Weil bound is only achieved in these cases when the trace of the element $\lambda\in\mathbb{F}_{q^n}$ over the subfield $\mathbb{F}_q$ is equal to zero.
\end{abstract}

\section{Introduction}
Let $\mathbb{F}_q$ denote a finite field with $q=p^s$ elements, where $p$ is an odd prime. One important class of curves over finite fields is known as Artin-Schreier's curves. These curves are defined by the equation $y^q-y=f(x)$, where $f(x)\in \mathbb{F}_q[x]$. Extensive research has been conducted on these curves in various contexts, including references such as \cite{COS, Coulter, FP, H, Van}.

This class of curves can be generalized to multiple variables, resulting in hypersurfaces of the form $y^q-y=f(X)$, where $f(X) \in \mathbb{F}_q[X]$ and $X=(x_1,\dots,x_r)$. The study of the number of affine rational points on algebraic hypersurfaces over finite fields has significant applications in coding theory, cryptography, communications, and related fields, as evidenced by works such as \cite{Aubry, GO, Stepanov, Van}.

The first aim of this paper is to calculate the number of $\mathbb{F}_{q^n}$-rational points on the Artin-Schreier curve $\mathcal{C}_i$ defined by the equation:
\[\mathcal C_i: y^q-y = x(x^{q^i}-x) -\lambda,\]
where $i \in \mathbb{N}$ and $\lambda \in \mathbb{F}_{q^n}$. We denote the number of $\mathbb{F}_{q^n}$-rational points of $\mathcal{C}_i$ as $N_n(\mathcal{C}_i)$.

For each $i \in \mathbb{N}$, we introduce the map $Q_i: \mathbb{F}_{q^n} \rightarrow \mathbb{F}_q$ defined as follows
\begin{align*}
Q_i : \fqn &\to \fq\\
 \alpha & \mapsto \Tr(\alpha(\alpha^{q^i}-\alpha)-\lambda),
\end{align*}
where $\mathrm{Tr}:\fqn \to \fq$ denotes the trace function. We define $N_n(Q_i)$ as the number of zeroes of $Q_i$ in $\mathbb{F}_{q^n}$. According to Hilbert's Theorem 90, we can establish the relationship
\[N_n(\mathcal C_{i}) = q \cdot N_n(Q_i).\]
Thus, determining $N_n(\mathcal{C}_i)$ is equivalent to calculate $N_n(Q_i)$. Additional details regarding this fact can be found in  \cite{A1, A2, OS}.

The study of the number of $\mathbb{F}_{q^n}$-rational points on Artin-Schreier's curves has attracted the attention of many authors. 
For instance,  Wolfmann determined in  \cite{Wolf1} the number of rational points on the curve defined over $\mathbb{F}_{q^k}$, given by the equation $y^q - y = ax^s + b$, where $a \in \mathbb{F}_{q^k}^*$, $b \in \mathbb{F}_{q^k}$, $k$ is an even special integer, and $s$ has specific properties. 
Coulter, in \cite{Coulter}, determined the number of $\mathbb{F}_q$-rational points on the curve $y^{p^n} - y = ax^{p^{\alpha}+1} + L(x)$, where $a \in \mathbb{F}_q^*$, $L(x)$ is a $\mathbb{F}_{p^t}$-linearized polynomial, and $t = \gcd(n,s)$ divides $d = \gcd(\alpha,s)$. 
In  \cite{COS}, the authors determined the number of $\mathbb{F}_{q^m}$-rational points on the curve $y^{q^n} - y = \gamma x^{q^h+1} - \alpha$, in specific cases when $h, n, m$ are positive integers with $n$ dividing $m$, and $\gamma, a \in \mathbb{F}_{q^m}$ ($\gamma \neq 0$).
 In \cite{Lu}, they provided specific examples of maximal and minimal curves over finite fields in odd characteristic. They studied  the Artin-Schreier curves of the form $y^q - y = xS(x)$, where $S(x)$ represents a $\mathbb{F}_q$-linearized polynomial, and established a connection with quadratic forms. In our prior research \cite{BrocheroOliveira}, we obtained a formula for $N_n(C_i)$ when $\gcd(n,p)=1$.

In this paper, we employ an alternative method that involves determining the solutions of the quadratic form $Q_i$ using appropriate permutation matrices. With this approach, we compute the number $N_n(\mathcal{C}_i)$ without imposing any conditions on $\gcd(n,p)$. Additionally, we establish conditions for the Artin-Schreier curve $C_i$ in order to be maximal or minimal with respect to the Hasse-Weil bound.

The second objective of this paper is to determine the number of $\mathbb{F}_{q^n}$-rational points on the affine Artin-Schreier hypersurface $\mathcal{H}_r$, which is given by the equation

\begin{equation} \label{hr}
\mathcal{H}_r: y^q - y = \sum_{j=1}^r a_jx_j(x_j^{q^{i_j}} - x_j) - \lambda,
\end{equation}
where $a_j\in\mathbb{F}_q^*$ and $0< i_j< n$ for $1\le j\le r$. We denote by $N_n(\mathcal{H}_r)$ the number of $\mathbb{F}_{q^n}$-rational points on the hypersurface $\mathcal{H}_r$. The well-known Weil bound assures us that

\begin{equation}\label{cota}
|N_n(\mathcal{H}_r)-q^{rn}| \le (q-1)\prod_{j=1}^r q^{i_j}q^{\frac{nr}{2}}= (q-1)q^{\frac{nr+2I}{2}},
\end{equation}
where $I = \sum_{j=1}^r i_j$.

In this paper, we present necessary and sufficient conditions in order to the hypersurface $\mathcal{H}_r$ to be $\mathbb{F}_{q^n}$-maximal or $\mathbb{F}_{q^n}$-minimal, i.e.  $N_n(\mathcal{H}_r)$ achieves the upper or lower bound specified in \eqref{cota}. Currently, in the literature is always interesting to given a description of the number of rational points for Artin-Schreier's hypersurfaces, as well as the conditions under which these hypersurfaces satisfy the bound given in  \eqref{cota} or other known bounds.

Using the results obtained for the curve $\mathcal{C}_i$, we can explicitly determine the number of rational points $N_n(\mathcal{H}_r)$ and to identify the conditions to achieve the maximality or minimality of this hypersurface.

The organization of this paper is as follows: Section 2 provides background material and preliminary results. In Section 3, we compute the number of $\mathbb{F}_{q^n}$-rational points on the Artin-Schreier curves $C_i$ and establish necessary and sufficient conditions for these curves to be maximal or minimal. In Section 4, we determine the number of $\mathbb{F}_{q^n}$-rational points on the hypersurfaces $\mathcal{H}_r$ in  \eqref{hr} and derive explicit conditions for these hypersurfaces to be maximal or minimal.

\section{Preliminary results}
In this paper, we use the symbols $\psi$ and $\tilde{\psi}$ to represent the canonical additive characters of $\mathbb{F}_{q^n}$ and $\mathbb{F}_q$, respectively. The quadratic character of $\mathbb{F}_q$ is denoted as $\chi$. The trace function $\Tr$ maps elements from $\mathbb{F}_{q^n}$ to $\mathbb{F}_q$ and is defined as follows:
\begin{align*}
\Tr:  \fqn \to & \, \fq\\
x \mapsto &  \, x+x^q+\cdots + x^{q^{n-1}}.
\end{align*}
We use $\tau$ to represent the function
$\tau = \begin{cases}
	1& \text{ if } p \equiv 1 \pmod 4;\\
	i & \text{ if } p \equiv 3 \pmod 4.
	\end{cases}$

To determine the number of rational points on the curve $\mathcal{C}_i$, we associate the quadratic form $\Tr(x(x^{q^i}-x))$ with this curve. By fixing a basis for $\mathbb{F}_{q^n}$ over $\mathbb{F}_q$, we can provide its associated matrix and the dimension of its radical. To achieve this, we need to recall the following definitions.

 \begin{definition}
Consider a quadratic form $Q:\mathbb{F}_{q^n} \to \mathbb{F}_q$. The associated symmetric bilinear form $B:\mathbb{F}_{q^n}\times \mathbb{F}_{q^n} \to \mathbb{F}_q$ of $Q$ is defined as follows
$$B(\alpha,\beta) = \frac{1}2\left(Q(\alpha+\beta) -Q(\alpha)-Q(\beta)\right).$$ 
The radical of the quadratic form $Q: \mathbb{F}_{q^n} \to \mathbb{F}_q$ is the $\mathbb{F}_q$-subspace defined as
\[\text{rad}(Q) = \{\alpha \in \fqn : B(\alpha, \beta) = 0 \text{ for all } \beta\in \fqn \}.\]
Furthermore, $Q$ is considered non-degenerate if $\text{rad}(Q) = \{0\}$.
\end{definition}

Let $\mathcal{B}=\{v_1,\dots, v_n\}$ be a basis for $\mathbb{F}_{q^n}$ over $\mathbb{F}_q$. The $n\times n$ matrix $A=(a_{ij})$, defined as $a_{ij}= B(v_i, v_j)$, represents the associated matrix of the quadratic form $Q$ with respect to the basis $\mathcal{B}$. Notably, the dimension of $\text{rad}(Q)$ is given by $n - \text{rank}(A)$.

Consider the quadratic forms $Q_1:\mathbb{F}_{q^m} \to \mathbb{F}_q$ and $Q_2:\mathbb{F}_{q^n} \to \mathbb{F}_q$, where $m \geq n$. Let $U$ and $V$ be the associated matrices of $Q_1$ and $Q_2$, respectively. We say that $Q_1$ is equivalent to $Q_2$ if there exists $M \in \text{GL}_m(\mathbb{F}_q)$ such that
\[M^T UM =\left(
\begin{array}{c|c}
V & 0 \\ \hline
 0 & 0
\end{array}\right)\in M_m(\fq),\]

where $\text{GL}_m(\mathbb{F}_q)$ represents the group of invertible $m\times m$ matrices over $\mathbb{F}_q$, and $M_m(\mathbb{F}_q)$ denotes the set of $m\times m$ matrices over $\mathbb{F}_q$. Furthermore, $Q_2$ is said to be a {\em reduced form} of $Q_1$ if $\text{rad}(Q_2)= \{0\}$.
The following theorem is a well-known result about the number of solutions of quadratic forms over finite fields.

\begin{theorem}[\cite{LiNi}, Theorems $6.26$ and $6.27$] \label{sol}
Let $Q$ be a quadratic form over $\fqn$, where $q$ a power of an odd prime. Let $B_Q$ be the bilinear symmetric form associated to  $Q$,  $v = \dim( \text{rad}(B_Q))$ and $\tilde Q$ a reduced nondegenerate quadratic form equivalent to $Q$.  Set  $S_{\alpha}  = |\{ x \in \fqn| Q(x) = \alpha\}|$ and  let $\Delta$ be the determinant of the quadratic form $\tilde Q$.   Then 
\begin{enumerate}[(i)]
\item If  $n+v$ is even
\begin{equation} \label{casopar}
S_{\alpha} =
 \left\{
\begin{array}{ll}  
q^{n-1} +Dq^{(n+v-2)/2}(q-1) \quad &\text{ if } \alpha = 0,\\
 q^{n-1} - Dq^{(n+v-2)/2} \quad &\text{ if } \alpha \neq 0,
\end{array}\right.
\end{equation}
where $D = \chi((-1)^{(n-v)/2}\Delta)$. 

\item If $n+v$ is odd
\begin{align} \label{casoimpar}
S_{\alpha} = \left\{
\begin{array}{ll}  
q^{n-1}  \quad &\text{ if } \alpha = 0,\\
q^{n-1} + Dq^{(n+v-1)/2} \quad &\text{ if } \alpha \neq 0.
\end{array}\right.
\end{align}
where $D = \chi((-1)^{(n-v-1)/2}\alpha\Delta)$.
\end{enumerate}
In particular $D \in \{-1,1\}.$
\end{theorem}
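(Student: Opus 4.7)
The plan is to reduce the count to that of a diagonal nondegenerate form and then evaluate the resulting character sums using quadratic Gauss sums; this is the classical route taken in Chapter~6 of Lidl--Niederreiter.

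\emph{Reducing to the nondegenerate part.} First I would decompose $\fqn = W \oplus \mathrm{rad}(B_Q)$ as $\fq$-vector spaces. For any $z$ in the radical one has $Q(z) = B_Q(z,z) = 0$, hence $Q(w+z) = Q(w) + 2B_Q(w,z) + Q(z) = Q(w)$ for every $w \in W$. This immediately gives $S_\alpha = q^v \cdot \tilde S_\alpha$, where $\tilde S_\alpha$ counts the solutions of the reduced form $\tilde Q$ on the $r$-dimensional space $W$ with $r = n - v$. It therefore suffices to prove the corresponding formulas in the nondegenerate case; the factor $q^v$ together with the rewrites $v + r - 1 = n-1$ and $v + (r-2)/2 = (n+v-2)/2$ (and similarly in the odd-rank case) then reassembles the stated expressions.

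\emph{Diagonalization and Gauss sums.} Since $q$ is odd, $\tilde Q$ is $\fq$-equivalent to $\sum_{j=1}^r a_j y_j^2$ with $a_1\cdots a_r \equiv \Delta$ modulo $(\fq^*)^2$. Expanding the indicator of $\tilde Q(y) = \alpha$ via $\delta_{z,0} = q^{-1}\sum_{t\in\fq}\tilde\psi(tz)$ and peeling off the $t=0$ contribution yields
\[ \tilde S_\alpha = q^{r-1} + \tfrac{1}{q}\sum_{t\neq 0}\tilde\psi(-t\alpha)\prod_{j=1}^r \sum_{y_j\in\fq}\tilde\psi(ta_j y_j^2). \]
Using the standard evaluation $\sum_{y\in\fq}\tilde\psi(cy^2) = \chi(c)G$ for $c\ne 0$, where $G = G(\chi,\tilde\psi)$ satisfies $G^2 = \chi(-1)q$, the inner product collapses to $\chi(t^r\Delta)G^r$.

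\emph{Case split and assembly.} When $r = n-v$ is even, $\chi(t^r)$ is trivial and $G^r = \chi(-1)^{r/2}q^{r/2}$; the remaining sum $\sum_{t\ne 0}\tilde\psi(-t\alpha)$ equals $q-1$ or $-1$ depending on whether $\alpha = 0$ or not, and tidying exponents produces \eqref{casopar} with $D = \chi((-1)^{(n-v)/2}\Delta)$. When $r$ is odd, $\chi(t^r) = \chi(t)$ and $G^r = \chi(-1)^{(r-1)/2}q^{(r-1)/2}G$; the sum $\sum_{t\ne 0}\chi(t)\tilde\psi(-t\alpha)$ vanishes for $\alpha = 0$ and equals $\chi(-\alpha)G$ otherwise, and one further use of $G^2 = \chi(-1)q$ yields \eqref{casoimpar} with $D = \chi((-1)^{(n-v-1)/2}\alpha\Delta)$. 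The hard part is purely algebraic bookkeeping: keeping track of the powers of $\chi(-1)$ and of the parities $(n-v)/2$ versus $(n-v-1)/2$ so that the resulting sign factor matches $D$. The final assertion $D \in \{-1,+1\}$ is automatic since in each case $D$ is the quadratic character evaluated at a nonzero element of $\fq$.
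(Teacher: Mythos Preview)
Your argument is correct and is precisely the classical route (diagonalize, expand via additive characters, evaluate the one-variable sums as quadratic Gauss sums, and split on the parity of the rank). The paper, however, does not supply a proof of this statement at all: Theorem~\ref{sol} is quoted verbatim from \cite[Theorems~6.26 and~6.27]{LiNi} and used as a black box, so there is nothing in the paper to compare your proof against beyond the citation itself. Your write-up is essentially a condensed version of the Lidl--Niederreiter proof, with the only cosmetic difference that you handle the radical by an explicit direct-sum decomposition rather than by first passing to an equivalent block-diagonal matrix.
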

The following lemma establishes a connection between quadratic forms and character sums, providing valuable insights for our results. It can be derived through a straightforward computation using Theorem \ref{sol}.

\begin{lemma}\label{soma}
Let $H$ be a $n\times n$ non null symmetric matrix over $\fq$ and $l = \text{rank}(H)$. Then there exists $M \in \text{GL}_n(\fq)$ such that $D = MHM^T$ is a diagonal matrix, i.e., $D = \text{diag}(a_1,a_2, \dots, a_l, 0,\dots,0)$ where $a_i \in \fq^*$ for all $i=1,\dots,l$. For the quadratic form 
\[F: \fq^n \to \fq , \quad F(X) = XHX^T \quad (X = (x_1, \dots, x_n) \in \fq^n), \]
it follows that
$$\sum_{X \in \fqn} \tilde\psi\big(F(X)\big)= (-1)^{l(s+1)}\tau^{ls}\chi(\delta)q^{n-l/2},$$
where $\delta = a_1 \cdots a_l$. 
\end{lemma}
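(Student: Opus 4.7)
The plan is to diagonalize the symmetric matrix $H$ and reduce the character sum to a product of one-variable quadratic Gauss sums. First, since $q$ has odd characteristic and $H$ is symmetric of rank $l$, a standard argument (Sylvester's law of inertia, or iterative symmetric row/column operations in the style of Gaussian elimination) produces an invertible matrix $M \in \mathrm{GL}_n(\fq)$ such that $MHM^T = \mathrm{diag}(a_1,\dots,a_l,0,\dots,0)$ with $a_i \in \fq^*$. This establishes the first assertion of the lemma.

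For the character sum, I would apply the bijective change of variables $X = YM$ on $\fq^n$. This gives
\[
F(X) = X H X^T = Y(MHM^T) Y^T = \sum_{i=1}^{l} a_i y_i^2,
\]
which depends only on $y_1,\dots,y_l$, so the sum factors as
\[
\sum_{X\in\fq^n}\tilde\psi(F(X)) = q^{n-l}\prod_{i=1}^{l} S(a_i), \qquad S(a):=\sum_{y\in\fq}\tilde\psi(ay^2),
\]
where the factor $q^{n-l}$ comes from the free variables $y_{l+1},\dots,y_n$ that do not appear in $F$.

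The central computation is the evaluation of the one-variable quadratic Gauss sum $S(a)$. Writing the sum over $y$ as a sum over $z = y^2$ and using that each nonzero square has exactly two preimages (encoded by $1+\chi(z)$), $S(a)$ decomposes as $\sum_{z\in\fq}\tilde\psi(az) + \sum_{z\in\fq}\chi(z)\tilde\psi(az)$; the first term vanishes by nontriviality of $\tilde\psi$, and a substitution $w = az$ reduces the second to $\chi(a)\,G(\chi,\tilde\psi)$. Invoking the classical explicit evaluation of the quadratic Gauss sum over $\fq = \mathbb{F}_{p^s}$, namely $G(\chi,\tilde\psi) = (-1)^{s-1}\tau^{s} q^{1/2}$ (e.g.\ Theorem 5.15 of Lidl--Niederreiter), gives $S(a) = (-1)^{s-1}\tau^{s}\chi(a)\,q^{1/2}$.

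Multiplying the $l$ resulting factors, using $\chi(a_1)\cdots\chi(a_l)=\chi(\delta)$ and the parity identity $(-1)^{l(s-1)} = (-1)^{l(s+1)}$, produces the claimed formula $(-1)^{l(s+1)}\tau^{ls}\chi(\delta)q^{n-l/2}$. The argument is essentially routine; the only step that requires genuine care is the bookkeeping of the sign and phase in the quadratic Gauss sum evaluation so that it matches the paper's definition of $\tau$, which is the principal place where a slip could change the final sign/factor.
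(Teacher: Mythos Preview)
Your proof is correct. The paper itself does not give a detailed argument for this lemma; it only remarks that it ``can be derived through a straightforward computation using Theorem~\ref{sol}'' (the solution-count formulas $S_\alpha$ for quadratic forms). Your route is a genuinely different organization: rather than computing $\sum_{\alpha\in\fq} S_\alpha\,\tilde\psi(\alpha)$ from the explicit values of $S_\alpha$ in Theorem~\ref{sol}, you diagonalize and factor the character sum directly as a product of one-variable quadratic Gauss sums, then invoke Theorem~\ref{gaussdois} (Lidl--Niederreiter, Theorem~5.15). Both approaches are short and standard; yours is slightly more self-contained and avoids the parity split on $l$ that the solution-count route would require, while the paper's suggested route has the advantage of reusing a result it already states for other purposes. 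Your sign bookkeeping, using $-(-\tau)^s=(-1)^{s-1}\tau^s$ and the parity identity $(-1)^{l(s-1)}=(-1)^{l(s+1)}$, matches the paper's conventions exactly.
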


We end this section with some basics definitions and results on Gauss sums.
\begin{definition}
Let $\Psi$ be an additive character of $\fq$ and $\Phi$ a multiplicative character  of $\fq^*$. The Gauss sum of $\Psi $ and $\Phi$ is defined by
\[G(\Psi,\Phi) = \sum_{x \in \fq^*} \Psi(x)\Phi(x).\]
\end{definition}

\begin{theorem}\cite[Theorem 5.15]{LiNi}\label{gaussdois}
	Let $\chi$ be the quadratic character of $\fq^*$. Then
	$$G(\tilde\psi,\chi)=-(-\tau)^{s}\sqrt{q}.$$
\end{theorem}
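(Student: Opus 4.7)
The plan is to evaluate this Gauss sum in three steps: first establish $|G|$, then settle the sign in the prime-field case $q=p$, and finally lift to arbitrary $q=p^s$ by Hasse--Davenport.

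First I would compute $G(\tilde\psi,\chi)^2$ directly. Expanding the square, substituting $y=xz$ in the double sum, and using that $\chi(x^2)=1$ gives
$$G(\tilde\psi,\chi)^2 = \sum_{z\in\fq^*} \chi(z) \sum_{x\in\fq^*} \tilde\psi(x(1+z)).$$
The inner sum equals $q-1$ when $z=-1$ and $-1$ otherwise, so combining with $\sum_{z\in\fq^*}\chi(z)=0$ yields $G(\tilde\psi,\chi)^2 = \chi(-1)\,q$. Hence $G(\tilde\psi,\chi) \in \{\pm\sqrt{q}\}$ when $p\equiv 1\pmod 4$ and $G(\tilde\psi,\chi) \in \{\pm i\sqrt{q}\}$ when $p\equiv 3\pmod 4$, which already matches the absolute value implicit in the definition of $\tau$.

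Second, I would pin down the sign in the prime-field case $q=p$, where Gauss's classical theorem asserts $G(\tilde\psi_p,\chi_p) = \tau\sqrt{p}$ with no sign ambiguity. I would invoke this as a standard result since its proof (via a finite theta-sum identity, or Gauss's original product manipulation, or Schur's trace argument applied to a discrete Fourier transform matrix) is a self-contained classical story in its own right.

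Third, for general $q=p^s$, I would apply the Hasse--Davenport lifting relation. Writing $\tilde\psi = \tilde\psi_p \circ \Tr_{\fq/\F_p}$ and observing that the quadratic character of $\fq^*$ coincides with $\chi_p \circ N_{\fq/\F_p}$ (since $N(x)^{(p-1)/2}=x^{(q-1)/2}$), the Hasse--Davenport theorem gives
$$-G(\tilde\psi,\chi) = \bigl(-G(\tilde\psi_p,\chi_p)\bigr)^s.$$
Substituting $G(\tilde\psi_p,\chi_p)=\tau\sqrt{p}$ yields
$$G(\tilde\psi,\chi) = -(-\tau\sqrt{p})^s = -(-\tau)^s\,p^{s/2} = -(-\tau)^s\sqrt{q},$$
which is exactly the stated formula. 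The principal obstacle is Step~2, the prime-field sign determination; it is the historically subtle ingredient and the reason the authors choose to cite the theorem rather than reprove it. Steps~1 and~3 are essentially algebraic bookkeeping once Step~2 is in hand.
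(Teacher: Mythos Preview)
The paper does not prove this theorem; it is simply quoted from Lidl--Niederreiter as a known result, so there is no in-paper argument to compare against. Your outline is the standard route and is essentially correct: the Hasse--Davenport lift in Step~3, combined with Gauss's classical sign determination over $\F_p$ in Step~2, yields exactly $G(\tilde\psi,\chi)=-(-\tau)^s\sqrt{q}$.

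One small imprecision worth flagging: in Step~1 you assert $G\in\{\pm i\sqrt{q}\}$ whenever $p\equiv 3\pmod 4$, but $\chi(-1)=(-1)^{(q-1)/2}$ depends on $q=p^s$, not on $p$ alone. When $p\equiv 3\pmod 4$ and $s$ is even one has $\chi(-1)=1$, hence $G\in\{\pm\sqrt{q}\}$. This does not damage the proof, since Step~1 is only a consistency check and the actual determination runs entirely through Steps~2 and~3; but the remark as written is inaccurate and should be rephrased in terms of $q\pmod 4$ (or simply omitted, since the Hasse--Davenport argument makes it redundant).
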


\section{The curve $y^q-y=x(x^{q^i}-x)-\lambda$}
In \cite{BrocheroOliveira}, we establish a formula for computing the number $N_n(\mathcal C_1)$ when $\gcd(n,p)=1$. In this section, we introduce a method that enables us to compute the number $N_n(\mathcal C_i)$ without imposing any conditions on $n$ and an odd prime $p$.
Consider the basis $\mathcal B=\{\beta_1, \dots, \beta_n\}$ of $\mathbb{F}_{q^n}$ over $\mathbb{F}_q$ and
\begin{equation}\label{B}
B =\begin{psmallmatrix}
\beta_1 & \beta_1^q &\cdots  &\beta_1^{q^{n-1}} \\
\beta_2 & \beta_2^q & \cdots & \beta_2^{q^{n-1}} \\
\vdots & \cdots & \ddots & \vdots \\
\beta_n & \beta_n^q & \cdots & \beta_n^{q^{n-1}}
\end{psmallmatrix}.
\end{equation}
We utilize this matrix as a tool to establish a connection between $N_n(\mathcal C_{i})$ and the number of solutions of the equation
\begin{equation} \label{Tra1}
\Tr (\alpha(\alpha^{q^i} -\alpha)) = \Tr(\lambda),
\end{equation}
where $\alpha \in \mathbb{F}_{q^n}$ and $\lambda$ is a fixed element. Here, $N_n(Q_i)$ represents the number of solutions of $\Tr(\alpha(\alpha^{q^i} -\alpha))=\Tr(\lambda)$ for $\alpha \in \mathbb{F}_{q^n}$. By Hilbert's Theorem $90$, we know that
$$ N_n(\mathcal C_{i}) = q N_n(Q_i).$$ 
The following proposition establishes a connection between $\Tr(x^{q^i+1}-x^2-\lambda)$ and a quadratic form.
The following proposition associates $\Tr(x^{q^i+1}-x^2-\lambda)$ with  a quadratic form. 

\begin{proposition}\label{pr2}
Let $f(x) = x^{q^i+1}-x^2-\lambda$, where $\lambda \in \fqn$. The number of solutions of $\Tr(f(x))=0$ in $\F_{q^n}$ is equal to the number of solutions in $\F_q^n$  of the quadratic form 
\[(x_1 \,\, x_2 \,\, \cdots \,\, x_n) A  \begin{pmatrix}
x_1\\
x_2 \\
\vdots \\
x_n
\end{pmatrix} = \Tr(\lambda), \] where   $A = (a_{j,l})$ is the $n\times n$ matrix defined by  the relations $a_{j,l} =   \frac{1}{2} \Tr(\beta_j^{q^i}\beta_l +\beta_l^{q^i} \beta_j -2\beta_j\beta_l).$  
\end{proposition}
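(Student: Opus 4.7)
The plan is to expand $\Tr(f(\alpha))$ for $\alpha\in\fqn$ in coordinates with respect to the basis $\mathcal{B}=\{\beta_1,\dots,\beta_n\}$, and show that the resulting expression is exactly the quadratic form in the statement. Concretely, I would write every element $\alpha\in\fqn$ uniquely as $\alpha=\sum_{j=1}^n x_j\beta_j$ with $x_j\in\fq$; since the map $(x_1,\dots,x_n)\mapsto\alpha$ is a bijection $\fq^n\to\fqn$, it is enough to exhibit the algebraic identity that turns $\Tr(f(\alpha))=0$ into the displayed matrix equation.

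The first step is the raising-to-power computation. Because $x_j\in\fq$, we have $x_j^{q^i}=x_j$, so
\[
\alpha^{q^i}=\sum_{j=1}^n x_j\beta_j^{q^i},\qquad \alpha^2=\sum_{j,l}x_jx_l\beta_j\beta_l,\qquad \alpha^{q^i+1}=\sum_{j,l}x_jx_l\beta_j^{q^i}\beta_l.
\]
Applying the $\fq$-linearity of $\Tr$ and subtracting $\Tr(\lambda)$ on the appropriate side yields
\[
\Tr(f(\alpha))=\sum_{j,l}x_jx_l\Tr(\beta_j^{q^i}\beta_l-\beta_j\beta_l)-\Tr(\lambda).
\]

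The second step is to symmetrize. Since $x_jx_l=x_lx_j$, each off-diagonal contribution can be split evenly between the $(j,l)$ and $(l,j)$ entries; formally, for any doubly-indexed family $c_{j,l}$ one has $\sum_{j,l}x_jx_lc_{j,l}=\sum_{j,l}x_jx_l\cdot\frac12(c_{j,l}+c_{l,j})$. Applying this to $c_{j,l}=\Tr(\beta_j^{q^i}\beta_l-\beta_j\beta_l)$ rewrites the quadratic expression as $\sum_{j,l}a_{j,l}x_jx_l$ with
\[
a_{j,l}=\tfrac12\Tr\bigl(\beta_j^{q^i}\beta_l+\beta_l^{q^i}\beta_j-2\beta_j\beta_l\bigr),
\]
which is the symmetric matrix $A$ in the statement. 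Combining the two steps, the equation $\Tr(f(\alpha))=0$ becomes $(x_1\cdots x_n)A(x_1\cdots x_n)^T=\Tr(\lambda)$, and the bijection $\fq^n\leftrightarrow\fqn$ through $\mathcal{B}$ finishes the count.

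There is no serious obstacle here: the proof is a direct coordinate expansion together with a symmetrization identity, and the only point requiring minor care is keeping track of the factor $\tfrac12$ (which relies on $p$ being odd, an assumption already in force throughout the paper) when passing from the naive coefficient $c_{j,l}$ to the symmetric one $a_{j,l}$.
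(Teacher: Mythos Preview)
Your proposal is correct and follows essentially the same approach as the paper: both proofs expand $\alpha$ in coordinates with respect to the basis $\mathcal{B}$, use $\fq$-linearity (the paper writes out the trace as an explicit sum over Frobenius powers, you invoke linearity directly) to obtain the bilinear expression with coefficients $\Tr(\beta_j^{q^i}\beta_l-\beta_j\beta_l)$, and then symmetrize to produce the matrix $A$. Your remark about the factor $\tfrac12$ requiring $p$ odd is appropriate and matches the standing hypothesis in the paper.
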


\begin{proof}
Given $x\in \mathbb{F}_{q^n}$, we can express it as $x = \sum_{j=1}^{n} \beta_jx_j$, where $x_1,\dots, x_n\in \mathbb{F}_q$ are coefficients with respect to the basis $\mathcal{B}$. Consequently, the equation $\Tr(f(x)) = 0$ is equivalent to
\begin{equation} \label{tra}
\sum_{k=0}^{n-1} f(x)^{q^k} = 0.
\end{equation}
Expanding this expression, we obtain
\begin{align*}
\sum_{k=0}^{n-1} f(x)^{q^k}  & = \sum_{k=0}^{n-1}\Bigl( \sum_{j=1}^{n} \beta_j x_j\Bigr)^{q^k} \Bigl( \sum_{l=1}^{n} (\beta_l^{q^i}-\beta_l) x_l \Bigr)^{q^k}-\Tr(\lambda)\\
&= \sum_{j,l=1}^{n} \Bigl( \sum_{i=0}^{n-1} \beta_j^{q^k}(\beta_l^{q^{i+k}}-\beta_l^{q^k})\Bigr) x_jx_l -\Tr(\lambda).
\end{align*}
The relation 
 \(\displaystyle\sum_{j,l=1}^{n} \left( \sum_{k=0}^{n-1} \beta_j^{q^k}(\beta_l^{q^{i+k}}-\beta_l^{q^k})\right) x_jx_l = \Tr(\lambda)\) is obtained  from \eqref{tra}. 
 By symmetrizing this expression, we observe that

 $$  \frac{1}{2}\displaystyle   \sum_{k=0}^{n-1} \beta_j^{q^k}\beta_l^{q^k}(\beta_l^{q^{i+k}-q^k}+\beta_j^{q^{i+k}-q^k}-2)= \frac{1}2 \Tr\big(\beta_j^{q^i}\beta_l+\beta_l^{q^i}\beta_j-2\beta_j\beta_l \big)=a_{j,l}, $$
 and the result follows.
\end{proof}

The matrix $A$ in Proposition \ref{pr2} can be expressed as
\[A = \frac{1}{2} (A_1+A_2-2A_3),\]
where  $A_1 = (\Tr(\beta_j^{q^i}\beta_l))_{j,l}$,  $A_2 = (\Tr(\beta_j\beta_l^{q^i}))_{j,l}$ and $A_3 = (\Tr(\beta_j\beta_l))_{j,l}$.  
Let $\mathcal{P}$ be the $n\times n$ cyclic permutation matrix defined by
\[\mathcal P =  \begin{psmallmatrix}
0&1&0& \cdots & 0\\
0&0&1& \cdots &0 \\
\vdots& \cdots & \ddots & \cdots &\vdots\\
0&0&0& \cdots &1\\
1&0&0& \cdots & 0
\end{psmallmatrix},\]
and note that $\mathcal{P}^{-1} = \mathcal{P}^T$. It follows that $A_1 = B(\mathcal{P}^i)^T B^T$, $A_2 = B(\mathcal{P}^i) B^T$, and $A_3 = BB^T$. Hence, we have $A = \frac{1}{2} BM_{n,i}B^T$, where
 
\begin{equation}
M_{n,i} = \left(\mathcal{P}^i\right)^{^{\ }_T}-2 Id +\mathcal{P}^i,\label{Mni} 
\end{equation}

i.e.,  the matrix $M_{n,i}=(m_{j,l})$ is given by
\[m_{j,l} = \begin{cases}
-2 & \text{ if }j=l;\\
1 & \text{ if } |j-l| = i;\\
0 & \text{  otherwise, }  
\end{cases}\]
with the convention that the rows and columns of the matrix are indexed from $0$ to $n-1$, i.e., $0\leq j,l \leq n-1$.

In order to compute the number of solutions of the quadratic form defined by matrix $A$, we will consider an equivalent matrix obtained by using the matrix $M_{n,i}$. We will then compute the character of the determinant of this equivalent matrix. We begin by considering the case $i=1$.

\subsection{The case $i=1$.}

The following proposition provides us  a method to choose a suitable basis $\mathcal B$ that simplifies the calculation of $\chi(\det(A'))$, where $A'$ is a reduced matrix of $A$.

\begin{proposition}\label{beta}
Suppose that $\gcd(n,p)=p$. Then there exists an element $\beta \in \fqn\setminus \fq$ such that
\[\beta^{q}+\beta^{q^{n-1}}-2\beta = 0.\]
\end{proposition}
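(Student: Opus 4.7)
The plan is to rewrite the equation $\beta^q + \beta^{q^{n-1}} - 2\beta = 0$ as a condition on $\beta$ under the Frobenius automorphism, and then produce $\beta$ via additive Hilbert 90. Let $\sigma : \fqn \to \fqn$ denote the Frobenius $x \mapsto x^q$, so $\sigma^{n-1}(\beta) = \beta^{q^{n-1}} = \sigma^{-1}(\beta)$. The desired identity becomes $(\sigma + \sigma^{-1} - 2)\beta = 0$, and since $\sigma$ is invertible, composing with $\sigma$ shows this is equivalent to $(\sigma - 1)^2 \beta = 0$. So it suffices to find $\beta \in \fqn \setminus \fq$ with $(\sigma-1)^2 \beta = 0$, i.e.\ an element of $\ker(\sigma-1)^2$ outside $\ker(\sigma-1) = \fq$.

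For the existence of such a $\beta$, I would pick any $c \in \fq^*$ and solve the inhomogeneous Artin–Schreier-type equation $\sigma\beta - \beta = c$. The obstruction to solvability is exactly the trace: by the additive form of Hilbert's Theorem 90, the image of $\sigma - 1$ on $\fqn$ equals the kernel of $\Tr : \fqn \to \fq$. Here the hypothesis $\gcd(n,p)=p$, i.e.\ $p \mid n$, is precisely what guarantees $\Tr(c) = n\cdot c = 0$ in $\fq$ for every $c \in \fq$, because $n$ vanishes in characteristic $p$. Therefore, for any chosen $c \in \fq^*$ there is some $\beta \in \fqn$ with $\sigma\beta - \beta = c$.

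It remains to check the two required properties of this $\beta$. First, $\beta \notin \fq$: if $\beta \in \fq$, then $\sigma\beta = \beta$, forcing $c = 0$, a contradiction. Second, $(\sigma-1)^2 \beta = (\sigma - 1)(c) = \sigma(c) - c = 0$, because $c \in \fq$ is fixed by $\sigma$. Translating back, this reads $\beta^{q^2} - 2\beta^q + \beta = 0$, and applying $\sigma^{-1}$ recovers the target equation $\beta^q + \beta^{q^{n-1}} - 2\beta = 0$.

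I do not anticipate any real obstacle here: the entire argument rests on the clean observation that the hypothesis $p \mid n$ is exactly the divisibility condition that makes additive Hilbert 90 applicable to every element of $\fq$, and this instantly enlarges $\ker(\sigma-1)^2$ beyond $\ker(\sigma-1) = \fq$. The only choice to make is the scalar $c \in \fq^*$, which is arbitrary.
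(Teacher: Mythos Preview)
Your proof is correct and takes a genuinely different route from the paper's. The paper argues by dimension-counting via the dictionary between $q$-linearized polynomials and ordinary polynomials: it shows that the $\fq$-space of solutions of $x^q+x^{q^{n-1}}-2x=0$ inside $\fqn$ has dimension $\deg\gcd(x^n-1,\,x^{n-1}+x-2)=\deg\gcd\bigl((x^{\tilde n}-1)^{p^u},(x-1)^2\bigr)=2$ (using $p\mid n$ and $p$ odd), hence contains an element outside the line $\fq$. Your argument is instead constructive: you recast the condition as $(\sigma-1)^2\beta=0$ and then \emph{build} a nontrivial element of $\ker(\sigma-1)^2$ by solving $\sigma\beta-\beta=c$ for a fixed $c\in\fq^*$, which additive Hilbert~90 allows precisely because $p\mid n$ forces $\Tr(c)=nc=0$. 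Both approaches hinge on the same hypothesis at the same moment, but yours isolates more transparently \emph{why} $p\mid n$ is the relevant condition (it is exactly what makes $\fq\subset\ker\Tr$), and it hands you the element $\beta$ explicitly together with the side relation $\beta^q-\beta\in\fq^*$ that the paper will need afterward. The paper's method, on the other hand, generalises mechanically to other $q$-linearized equations without rethinking the structure.
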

\begin{proof}
It is enough to show that the dimension of the subspace generated by the roots of the polynomial
\[\gcd(x^{q^n}-x, x^q+x^{q^{n-1}}-2x)\]
is at least two. It suffices to show that
\[\deg(\gcd(x^n-1, x+x^{n-1}-2)\ge 2.\]
Let us express $n$ as $n = p^u\tilde n$, where $\gcd(\tilde n, p) = 1$ and $u\ge 1$. Then we have
\begin{align*}
\deg(\gcd(x^n-1, x+x^{n-1}-2) & = \deg(\gcd(x^n-1, x^2-2x+1))\\
& = \deg((x^{\tilde n} -1)^{p^u}, (x-1)^{2})\\
& =\deg( (x-1)^{\min\{p^u, 2\}})= 2. 
\end{align*} 
This proves the proposition.
\end{proof}
According to Proposition \ref{beta}, we can choose the basis $\mathcal B$ in such a way that $\beta_n = 1$ and $\beta_{n-1} = \beta$, where $\beta$ satisfies
$$\beta^q+\beta^{q^{n-1}}-2\beta = 0,$$
which is equivalent to
\begin{equation}\label{beta1} \beta^{q^2}-2\beta^q+\beta=0.
\end{equation}
The above equation implies
\begin{equation}\label{beta2}
(\beta^q-\beta)^q = \beta^q-\beta.
\end{equation}
In particular, we have $\beta^q - \beta \in \mathbb{F}_q$. From now on, we fix the basis $\mathcal B = \{\beta_1, \beta_2, \dots, \beta_{n-2}, \beta, 1\}$. The following proposition determines the rank of $M_{n,1}$ and the determinant of one of its reduced matrices.

\begin{proposition}\label{det}
The rank  of the $n\times n$ matrix $M_{n,1} = \mathcal P^T-2 Id +\mathcal P$ over $\F_{q}$ is given by
\[\text{rank } M_{n,1} = \left\{
\begin{array}{ll}
n-1 \quad \text{ if } \gcd(n,p) = 1;\\
n-2  \quad \text{ if } \gcd(n,p) = p.
\end{array}\right.\]

In addition,  if $M_{n,1}'$ denotes the principal submatrix of $M_{n,1}$ constructed from the first rank$(M_{n,1})$ rows and columns, then $M_{n,1}'$ is a reduced matrix of $M_{n,1}$ and 
\[ \det M_{n,1}' = \left\{
\begin{array}{ll}
 (-1)^{n-1}n \quad &\text{ if } \gcd(n,p) = 1;\\
(-1)^{n-1}  \quad& \text{ if } \gcd(n,p) = p.
\end{array}\right.  \]
\end{proposition}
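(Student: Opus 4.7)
The plan is to split the task into two pieces: first determine $\operatorname{rank} M_{n,1}$ by exhibiting a basis of its kernel, and then evaluate the determinant of the specified principal submatrix $M_{n,1}'$, which turns out to be purely tridiagonal.

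For the kernel, I note that a column vector $v=(v_0,\dots,v_{n-1})^T\in\fq^n$ lies in $\ker M_{n,1}$ if and only if
\[
v_{j-1}-2v_j+v_{j+1}=0 \qquad (0\le j\le n-1),
\]
with indices read modulo $n$. The characteristic polynomial $r^2-2r+1=(r-1)^2$ has $1$ as a double root, so every solution of the unconstrained recursion has the form $v_j=a+bj$ for constants $a,b\in\fq$. Re-imposing the two wrap-around equations (at $j=0$ and $j=n-1$) reduces to the single condition $bn\equiv 0\pmod p$. Consequently $\dim\ker M_{n,1}=1$ when $\gcd(n,p)=1$ (kernel generated by the all-ones vector) and $\dim\ker M_{n,1}=2$ when $p\mid n$ (with the extra generator $(0,1,\dots,n-1)^T$), yielding the asserted rank in each case.

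Next I identify $M_{n,1}'$ explicitly. The only entries of $M_{n,1}$ outside the main diagonal and its two neighbours are the wrap-around corners at $(0,n-1)$ and $(n-1,0)$; these are absent from any principal submatrix drawn from the first $k\le n-1$ rows and columns, so $M_{n,1}'$ is the $k\times k$ tridiagonal matrix $T_k$ having $-2$ on the diagonal and $1$ on the adjacent diagonals, with $k=\operatorname{rank} M_{n,1}$. Setting $D_k=\det T_k$ and expanding along the first row yields the recurrence $D_k=-2D_{k-1}-D_{k-2}$ with initial values $D_0=1$ and $D_1=-2$; a routine induction then produces the closed form $D_k=(-1)^k(k+1)$. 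Substituting $k=n-1$ gives $\det M_{n,1}'=(-1)^{n-1}n$ in the coprime case, while substituting $k=n-2$ gives $(-1)^{n-2}(n-1)$, which reduces to $(-1)^{n-1}$ in $\fq$ when $p\mid n$, since then $n-1\equiv -1\pmod p$.

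Finally, to argue that $M_{n,1}'$ is a genuine \emph{reduced} matrix of $M_{n,1}$ and not merely an invertible principal submatrix of the correct size, I would invoke a symmetric Schur-complement reduction: writing $M_{n,1}=\bigl(\begin{smallmatrix}M_{n,1}' & B\\ B^T & C\end{smallmatrix}\bigr)$ and conjugating by $L=\bigl(\begin{smallmatrix} I & -(M_{n,1}')^{-1}B\\ 0 & I\end{smallmatrix}\bigr)\in \text{GL}_n(\fq)$ block-diagonalizes $M_{n,1}$ with bottom-right block $C-B^T(M_{n,1}')^{-1}B$, which must vanish because the rank is already exhausted by the top-left block. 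The main obstacle I anticipate is precisely the cyclic boundary analysis of the kernel: it is the wrap-around contribution that is responsible for the splitting into the two cases, so the condition $bn\equiv 0\pmod p$ must be tracked carefully. Once the kernel dimension is in hand, the rest is a textbook tridiagonal determinant recursion.
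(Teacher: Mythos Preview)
Your argument is correct and the final computations match the paper's, but you reach the rank statement by a different route. The paper exhibits explicit congruence matrices: it finds a unipotent $U$ with $U\,M_{n,1}\,U^T=\operatorname{diag}(M_{n-1},0)$, and in the case $p\mid n$ a second matrix $V$ with $VU\,M_{n,1}\,U^TV^T=\operatorname{diag}(M_{n-2},n,0)=\operatorname{diag}(M_{n-2},0,0)$; from this the rank is read off and the determinant of the reduced block is computed by the same tridiagonal recursion you use. Your approach instead determines $\ker M_{n,1}$ directly by solving the second-order recurrence with cyclic boundary, obtaining the dichotomy from the single scalar condition $bn=0$ in $\fq$, and then invokes a Schur-complement congruence to certify that the principal minor is a reduced form. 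The paper's method has the advantage of producing the explicit transformation matrices (which it reuses later when passing to $A=\tfrac12 B M_{n,1} B^T$), whereas your kernel computation is shorter and more transparent about \emph{why} the rank drops exactly when $p\mid n$. Both arguments converge on the identical determinant recursion $D_k=-2D_{k-1}-D_{k-2}$ with closed form $D_k=(-1)^k(k+1)$.
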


\begin{proof}
Let us denote $M_n$ as the matrix:
$$M_n= \begin{psmallmatrix}
-2 & 1 & 0 &   \ldots   & 0 & 0 & 0\\
1& -2&1& \ldots & 0 & 0& 0\\
0&1&-2& \ldots & 0 & 0&0\\
\vdots & \vdots &   \vdots &\ddots& \vdots&\ddots & \vdots \\
0& 0& 0 &  \ldots & -2&1 &0\\
0& 0& 0 &  \ldots & 1&-2 &1\\
0 & 0 & 0 &  \ldots &0&1&-2\\
\end{psmallmatrix}.$$
We observe that $M_{n,1} = M_n + R_n$, where $R_n = (r_{i,j})$ with
$r_{i,j} = \begin{cases}
1 & \text{ if } (i,j) \in\{(1,n),(n,1)\};\\
0 & \text{ otherwise. }
\end{cases}$
If we define  $$\displaystyle U=\begin{psmallmatrix}
1 & 0 & 0 &   \ldots   & 0 & 0 \\
0& 1&0& \ldots & 0 & 0\\
\vdots & \vdots &   \vdots &\ddots& \vdots & \vdots \\
0& 0& 0 &  \ldots & 1&0 \\
1 & 1 & 1 &  \ldots &1&1\\
\end{psmallmatrix}$$
 then we have  
 $$UM_{n,1}U^T=\begin{psmallmatrix}
-2 & 1 & 0 &  \ldots   & 0 & 0 & 0\\
1& -2&1& \ldots & 0 & 0& 0\\
0&1&-2& \ldots & 0 & 0&0\\
\vdots &  \ddots & \vdots &\ddots&\vdots&\ddots & \vdots \\
0& 0& 0 &  \ldots & -2&1 &0\\
0& 0& 0 &  \ldots & 1&-2 &0\\
0 & 0 & 0 &   \ldots&0&0&0 \\
\end{psmallmatrix} = \left(
\begin{array}{c|c}
M_{n-1}  & 0 \\ \hline
 0 & 0
\end{array}\right).$$
We claim that $L_{n-1} = \det(M_{n-1}) = (-1)^{n-1}n$ for $n > 1$.
To prove this, we expand the determinant of $M_{n-1}$ along the first row and obtain the recursive relation 
\begin{equation}\label{mh}
 L_{n-1}   =  -2  L_{n-2}- L_{n-3} \text{ for all } n\ge 4.
\end{equation}
This implies that the sequence ${L_n}_{\{n\geq2\}}$ satisfies the recurrence relation \eqref{mh} with the associated characteristic polynomial given by $(z+1)^2$. Since $-1$ is a double root of the characteristic polynomial, we have $L_{n-1} = a(-1)^n + b(-1)^n n$, where $a$ and $b$ are elements of $\mathbb{F}_q$. Considering that $L_2 = 3$ and $L_3 = -4$, we can conclude that $a = 0$ and $b = -1$. Thus, $L_{n-1} = (-1)^{n-1}n$ as desired.

Furthermore, if $\gcd(n,p) = 1$, it follows that $L_{n-1} = (-1)^{n-1}n \neq 0$, which means that the rank of $M_{n-1}$ is $n-1$. This implies that the rank of $M_{n,1}$ is also $n-1$.

In the case $\gcd(n,p) = p$, we define the matrix $V$ as

$$\displaystyle V=\begin{psmallmatrix}
1 & 0 & 0 &   \ldots   & 0& 0 & 0 & 0 \\
0& 1&0& \ldots & 0& 0 & 0& 0\\
\vdots & \vdots &   \vdots &\ddots& \vdots& \vdots & \vdots & \vdots \\
0& 0& 0 &  \ldots &1& 0&0&0 \\
0& 0& 0 &  \ldots &0& 1&0&0 \\
1 & 2 & 3 &  \ldots&n-3 & \frac {n-4}2&-1&0\\
0 & 0 & 0 &  \ldots &0&0&0&1\\
\end{psmallmatrix}.$$ 
We observe that $V$ is invertible, and by matrix operations, we have
$$VUM_{n,1}U^TV^T
=\left(
\begin{array}{c|c}
M_{n-2}  & 0\quad 0 \\ \hline
 \begin{array}{c}0 \cdots 0\\ 0 \cdots 0\end{array}& \begin{array}{cc}
 n&0\\
 0&0
 \end{array}
\end{array}\right).$$
Therefore, $L_{n-2} = (-1)^{n-2}(n-1) \neq 0$, and the rank of $M_{n,1}$ is $n-2$. Thus, the result follows.

\end{proof}
Using the notation in this proposition, we can express $A$ as follows

\begin{align*} A = \begin{cases} \frac{1}2 BU^{-1} \left(\begin{array}{c|c}
M_{n-1}  & 0 \\ \hline
 0 & 0
\end{array}\right)(U^{-1})^TB^T& \text{ if } \gcd(n,p) =1; \\
\frac{1}2BU^{-1}V^{-1}\left(
\begin{array}{c|c}
M_{n-2}  & 0\quad 0 \\ \hline
 \begin{array}{c}0 \cdots 0\\ 0 \cdots 0\end{array}& \begin{array}{cc}
 n&0\\
 0&0
 \end{array}
\end{array}\right) (V^{-1})^T(U^{-1})^TB^T& \text{ if } \gcd(n,p) =p.\\
\end{cases}
\end{align*}
Now, we will determine the determinant of a reduced matrix of $A$. This requires the use of the following proposition, which tell us about the quadratic character of the determinant of $BB^T$.
\begin{proposition}\label{qq}
Let $\mathcal B = \{\beta_1, \beta_2, \dots, \beta_{n-2}, \beta,1\}$ be a basis of $\fqn$ over $\fq$ and $B$ the matrix defined in \eqref{B}.
Then 
\[\chi(\det(BB^T)) = \begin{cases}
1 & \text{ if } n \text{ is odd;} \\
-1 & \text{ if } n \text{ is even.}
\end{cases}\]

\end{proposition}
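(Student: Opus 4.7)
My plan is to exploit the identity $\det(BB^T) = (\det B)^2$ and then analyze $\det B$ via the action of the Frobenius automorphism. First, since $(BB^T)_{j,l} = \sum_{k=0}^{n-1}\beta_j^{q^k}\beta_l^{q^k} = \Tr(\beta_j\beta_l)$, the matrix $BB^T$ is the Gram matrix of $\mathcal{B}$ with respect to the trace bilinear form $(x,y)\mapsto \Tr(xy)$, which is nondegenerate on $\fqn/\fq$. Hence $\det(BB^T) = (\det B)^2 \neq 0$, and the problem reduces to deciding whether $(\det B)^2$ is a square in $\fq^*$.

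The key step is a Frobenius calculation. Applying $\sigma: x\mapsto x^q$ entrywise to $B$ sends $\beta_j^{q^l}$ to $\beta_j^{q^{l+1}}$ (with exponents read modulo $n$), which is exactly a cyclic shift of the columns of $B$. Writing $P$ for the permutation matrix of the $n$-cycle $(1\,2\,\cdots\,n)$, one has $\sigma(B) = BP$, so
\[
\sigma(\det B) = \det B \cdot \det P = (-1)^{n-1}\det B.
\]

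Conclude by parity. If $n$ is odd, $\sigma(\det B) = \det B$, hence $\det B \in \fq^*$; then $(\det B)^2$ is a nonzero square in $\fq$ and $\chi(\det(BB^T)) = 1$. If $n$ is even, $\sigma(\det B) = -\det B$; since $\det B \neq 0$ and $\mathrm{char}(\fq)$ is odd, this forces $\det B \in \fqn\setminus\fq$, while $(\det B)^2 \in \fq$. If $(\det B)^2$ were $c^2$ with $c\in\fq$, we would have $\det B = \pm c \in \fq$, a contradiction; hence $(\det B)^2$ is a non-square in $\fq^*$ and $\chi(\det(BB^T)) = -1$.

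I do not anticipate a serious obstacle. The identity $\sigma(B) = BP$ and the sign of the $n$-cycle are routine, and the particular form of the basis $\mathcal{B}$ fixed earlier plays no role in the argument: any change of basis over $\fq$ multiplies $\det(BB^T)$ by the square of the transition determinant in $\fq^*$, leaving the quadratic character unchanged.
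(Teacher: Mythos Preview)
Your proof is correct and follows essentially the same approach as the paper: both compute $\sigma(\det B)=(-1)^{n-1}\det B$ via the action of Frobenius (you use the matrix identity $\sigma(B)=BP$, the paper expands the determinant over permutations) and then deduce the quadratic character from whether $\det B$ lies in $\fq$. Your write-up is in fact a bit more complete, since you spell out the final step---that for $n$ even, $(\det B)^2\in\fq^*$ cannot be a square in $\fq$ because $\det B\notin\fq$---which the paper leaves implicit.
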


\begin{proof}
We want to determine the determinant of $BB^T$. It is important to note that while $BB^T \in GL(\mathbb{F}_q)$, the individual matrices $B$ and $B^T$ do not necessarily belong to $GL(\mathbb{F}_q)$. Hence, we cannot directly compare the quadratic character $\chi(\det(BB^T))$ with $\chi(\det(B)^2)$ since the quadratic character $\chi$ is defined over $\mathbb{F}_q$. Therefore, we cannot conclude that $\chi(\det(BB^T))$ is equal to $\chi(\det(B)^2)$. The matrix $BB^T$ can be expressed as
\[BB^T = \begin{pmatrix}
\Tr(\beta_1^2) & \Tr(\beta_1\beta_2) & \cdots& \Tr(\beta_1\beta_n)\\
\Tr(\beta_2\beta_1) & \Tr(\beta_2^2) & \cdots &\Tr(\beta_2\beta_n)\\
\vdots & \cdots & \ddots & \vdots\\
\Tr(\beta_n\beta_1) & \Tr(\beta_n\beta_2) & \cdots& \Tr(\beta_n^2)\\
\end{pmatrix}\in GL_n(\F_q).\]
Let $\sigma: (\overline\F_q)\to \fq $ be the Frobenius map, and let $B= (c_{i,j})$ with $0\le i,j\le n-1$, where $c_{i,j} = c_i^{q^j}$ and $c_i= c_{i0}= \beta_{i-1}$.
We have
\begin{align*}
\det(B) &= \sum_{\tau \in S_n} (-1)^{\text{sing}(\tau)}c_{0\tau(0)}\dots c_{n-1\tau(n-1)}\\
&= \sum_{\tau \in S_n} (-1)^{\text{sing}(\tau)}c_{0}^{q^{\tau(0)}}\dots c_{n-1}^{q^{\tau(n-1)}}\\
\end{align*}
Then 
\begin{align*}
\sigma(\det(B)) &=  \sum_{\tau \in S_n} (-1)^{\text{sing}(\tau)}c_{0}^{q^{\tau(0)+1}}\dots c_{n-1}^{q^{\tau(n-1)+1}}\\
& = \sum_{\tau \in S_n} (-1)^{\text{sing}(\tau)}c_{0}^{\rho(\tau(0))}\dots c_{n-1}^{\rho(\tau(n-1))}\\
& = (-1)^{\text{sing}(\rho)}\det(B)\\
\end{align*}
where $\rho$ is the permutation $\begin{pmatrix}
0&1&2& \cdots & n-1
\end{pmatrix}$.
Since $$\begin{pmatrix}
0&1&2& \cdots & n-1
\end{pmatrix}= (0 \, n-1) (0 \, n-2) \cdots (0 \, 2) (0 \, 1),$$ we have that $(-1)^{\text{sing}(\rho)}= (-1)^{n-1}.$
Therefore, $\det(B) \in \fq$ if and only if $n$ is odd. In conclusion
\[\chi(\det(BB^T)) = \begin{cases}
1 & \text{ if } n \text{ is odd;} \\
-1 & \text{ if } n \text{ is even.}
\end{cases}\]
\end{proof}

\begin{proposition}\label{detB}
The rank  of the $n\times n$ matrix $A$ over $\F_{q}$ is given by
\[\text{rank } A = \left\{
\begin{array}{ll}
n-1 \quad \text{ if } \gcd(n,p) = 1;\\
n-2  \quad \text{ if } \gcd(n,p) = p,
\end{array}\right.\]
and a reduced matrix of $A$ is given by $A' = \begin{cases}
B_{11}M_{n-1}B_{11}^T & \text{ if } \gcd(n,p) =1; \\
\tilde B_{11}M_{n-2}\tilde B_{11}^T & \text{ if } \gcd(n,p) =p, \\
\end{cases}$ where the matrices $B_{11} = (b_{l,k})$ and   $ \tilde B_{11} = (\tilde b_{l,k})$ are given by
$$b_{l,k} = \beta_l^{q^{k-1}}- \beta_l^{q^{n-1}} \quad \text{ for }\quad  0\le l,k\le n-1$$
 and 
  \[b'_{l,k}=  \begin{cases}
\beta_l^{q^{k-1}}+k\beta_l^{q^{n-2}}-(k+1)\beta_l^{q^{n-1}} & \text{ if } 1\le l \le n-2,1\le k\le n-3;\\
\beta_l^{q^{k-1}}+\frac{n-4}2\beta_l^{q^{n-2}}-\frac{n-2}2\beta_l^{q^{n-1}} & \text{ if } 1\le l \le n-2,k=n-2.\\
\end{cases}\]
 In addition,  
\[ \chi(\det A') = \left\{
\begin{array}{ll}
(-1)^{n-1} \cdot \chi( (-2)^{n-1}n) \quad &\text{ if } \gcd(n,p) = 1;\\
(-1)^{n-1}\chi\left((-1)^{n-1}2^{n-2})\right) \quad& \text{ if } \gcd(n,p) = p.
\end{array}\right.  \]
\end{proposition}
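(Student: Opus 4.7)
The plan is to locate the radical of the bilinear form associated to $Q_1$, note that in the basis $\mathcal B$ the matrix $A$ is already block-triangular with its radical coinciding with the last coordinates, and then read off a reduced form from the factorization $A=\tfrac12 BM_{n,1}B^T$ together with the diagonalization of $M_{n,1}$ from Proposition \ref{det}. The character of its determinant then follows from a short calculation using Propositions \ref{qq} and \ref{det}.

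A direct symmetrization shows the associated bilinear form is $B(\alpha,\gamma)=\tfrac12\Tr(\gamma L(\alpha))$ with $L(\alpha)=\alpha^q+\alpha^{q^{n-1}}-2\alpha$, so $\mathrm{rad}(Q_1)=\ker L=\{\alpha\in\fqn:\alpha^q-\alpha\in\fq\}$. Since $\alpha^q-\alpha=c\in\fq$ is solvable iff $\Tr(c)=nc=0$, additive Hilbert 90 gives $\dim_{\fq}\ker L=1$ (spanned by $1$) when $\gcd(n,p)=1$, and $=2$ (spanned by $1$ and $\beta$, using Proposition \ref{beta}) when $p\mid n$. This determines $\mathrm{rank}(A)$; moreover, because the last one or two vectors of $\mathcal B$ now span the radical, $A$ already has the shape $\begin{pmatrix}A'&0\\0&0\end{pmatrix}$ in this basis, with $A'$ the principal block of size $\mathrm{rank}(A)$.

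For the coprime case, starting from $UM_{n,1}U^T=\begin{pmatrix}M_{n-1}&0\\0&0\end{pmatrix}$ one writes $A=\tfrac12(BU^{-1})\begin{pmatrix}M_{n-1}&0\\0&0\end{pmatrix}(BU^{-1})^T$. A direct entrywise calculation gives $(BU^{-1})_{l,k}=\beta_l^{q^{k-1}}-\beta_l^{q^{n-1}}$ for $k<n$ and $\beta_l^{q^{n-1}}$ for $k=n$, with last row $(0,\dots,0,1)$ because $\beta_n=1$. Hence $BU^{-1}$ is block upper triangular with diagonal blocks $B_{11}$ and $(1)$, identifying the principal block of $A$ as (a scalar multiple of) $B_{11}M_{n-1}B_{11}^T$ and yielding $\det B_{11}=\det(BU^{-1})=\det B$. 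Combining $\chi((\det B)^2)=(-1)^{n-1}$ from Proposition \ref{qq}, $\det M_{n-1}=(-1)^{n-1}n$ from Proposition \ref{det}, and the factor $\chi(2^{n-1})$ coming from the $\tfrac12$, one obtains $\chi(\det A')=(-1)^{n-1}\chi((-2)^{n-1}n)$.

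The case $p\mid n$ follows the same outline using $VUM_{n,1}U^TV^T=\begin{pmatrix}M_{n-2}&0\\0&0\end{pmatrix}$ (the middle entry $n$ vanishes in $\fq$). A brief check shows $V^2=I$, so $V^{-1}=V$, and one computes $BU^{-1}V$ entrywise: the upper $(n-2)\times(n-2)$ principal block is exactly $\tilde B_{11}$ as stated, while the bottom-right $2\times 2$ block is $\begin{pmatrix}c&\beta-c\\0&1\end{pmatrix}$ with $c=\beta^q-\beta\in\fq^*$ by \eqref{beta2}. Thus $\det\tilde B_{11}=-\det B/c$ and $\chi((\det\tilde B_{11})^2)=(-1)^{n-1}$; combining this with $\det M_{n-2}=(-1)^{n-2}(n-1)$ and $n-1=-1$ in $\fq$ gives $\chi(\det A')=(-1)^{n-1}\chi((-1)^{n-1}2^{n-2})$. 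The main obstacle will be the explicit bookkeeping in this case: verifying the precise entries $b'_{l,k}$ and the lower $2\times 2$ block requires combining the definition of $V$ with the arithmetic progression $\beta^{q^k}=\beta+kc$ implicit in \eqref{beta2}.
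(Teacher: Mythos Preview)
Your proposal is correct and follows essentially the same route as the paper: factor $A=\tfrac12 BM_{n,1}B^T$, use the $U$ (and $V$) reduction of $M_{n,1}$ from Proposition~\ref{det}, compute $BU^{-1}$ (resp.\ $BU^{-1}V^{-1}$) explicitly to extract the block-triangular structure, and then combine Propositions~\ref{det} and~\ref{qq} to evaluate $\chi(\det A')$. Your two additions—determining $\mathrm{rank}\,A$ directly from $\mathrm{rad}(Q_1)=\ker L$ (which also explains a priori why $A$ is already block-diagonal in the chosen basis) and the observation that $V^2=I$, so $V^{-1}=V$—are genuine simplifications over the paper's purely computational treatment, but they do not change the underlying strategy.
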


\begin{proof} We will divide the proof into two cases, based on the value of $\gcd(n,p)$, using the notation introduced in the proof of Proposition \ref{det} for the matrices $M_n$, $U$, and $V$.
\begin{enumerate}
\item If $\gcd(n,p)=1$. In this case, we have that 
\[A =\frac{1}2 BU^{-1} \left(\begin{array}{c|c}
M_{n-1}  & 0 \\ \hline
 0 & 0
\end{array}\right)(U^{-1})^TB^T\]
and
\begin{align} \label{AA}
BU^{-1} \nonumber
& = \begin{pmatrix}
\beta_1-\beta_1^{q^{n-1}}& \beta_1^q-\beta_1^{q^{n-1}}& \cdots& \beta_1^{q^{n-2}}-\beta_1^{q^{n-1}}& \beta_1^{q^{n-1}}\\
\beta_2-\beta_2^{q^{n-1}}& \beta_2^q-\beta_2^{q^{n-1}}& \cdots &\beta_2^{q^{n-2}}-\beta_2^{q^{n-1}}& \beta_2^{q^{n-1}}\\
\vdots & \cdots& \ddots &\cdots & \vdots\\
\beta-\beta^{q^{n-1}}& \beta^q-\beta^{q^{n-1}}& \cdots& \beta^{q^{n-2}}-\beta^{q^{n-1}}& \beta^{q^{n-1}}\\
0& 0& \cdots &0& 1
\end{pmatrix}\\
& = \left(\begin{array}{cc}
B_{11}& B_{12}\\
B_{21} & 1
\end{array}\right),
\end{align}
where $B_{12} = \begin{pmatrix}
\beta_1^{q^{n-1}}\\
\beta_2^{q^{n-1}}\\
\vdots\\
\beta^{q^{n-1}}\\
\end{pmatrix}$ and $B_{21} =\begin{pmatrix}
0&0&\cdots&0 \end{pmatrix}$. From \eqref{AA}, it follows that $\det(BU^{-1}) = \det(B_{11})$ and we have that
\begin{align}\label{pq}
\chi\left(\det(B_{11}M_{n-1}B_{11}^T)\right) & =\nonumber \chi\left(\det(BU^{-1}M_{n-1}(U^{-1})^T
B^T)\right)\\ \nonumber
& = \chi(\det(BB^T)\det(M_{n-1}))\\
& =(-1)^{n-1} \cdot \chi( (-1)^{n-1}n)\\ \nonumber
\end{align}
Since 
\begin{align*}
A & = \frac{1}2  \left(\begin{array}{cc}
B_{11}& B_{12}\\
B_{21} & \beta_n^{q^{n-1}}
\end{array}\right)  \left(\begin{array}{cc}
M_{n-1}& 0\\
0 & 0
\end{array}\right) \left(\begin{array}{cc}
B_{11}^T& B_{21}^T\\
B_{12}^T & \beta_n^{q^{n-1}}
\end{array}\right)\\
& = \frac{1}2 \left(\begin{array}{cc}
B_{11}M_{n-1}B_{11}^T& 0\\0&0
\end{array}\right),
\end{align*}
it follows that  $A$ has $B_{11}M_{n-1}B_{11}^T$ as a reduced matrix.  Additionally, we have 
$$\chi(\det(B_{11}M_{n-1}B_{11}^T)= (-1)^{n-1} \cdot \chi( (-2)^{n-1}n),$$
where the last step follows from \eqref{pq}.
\item 
If $\gcd(n,p)=p$, we have the expression for the matrix $A$ as follows
\begin{align*}
A & = \frac{1}2BU^{-1}V^{-1}\left(
\begin{array}{c|c}
M_{n-2}  & 0\quad 0 \\ \hline
 \begin{array}{c}0 \cdots 0\\ 0 \cdots 0\end{array}& \begin{array}{cc}
 n&0\\
 0&0
 \end{array}
\end{array}\right) (V^{-1})^T(U^{-1})^TB^T,\\
\end{align*}
where the matrices $U^{-1}V^{-1}$ and $BU^{-1}V^{-1}$ are given by
\[U^{-1}V^{-1} = \begin{psmallmatrix}
1 & 0 & 0 &   \ldots   & 0& 0 & 0 & 0 \\
0& 1&0& \ldots & 0& 0 & 0& 0\\
\vdots & \vdots &   \vdots &\ddots& \vdots& \vdots & \vdots & \vdots \\
0& 0& 0 &  \ldots &1& 0&0&0 \\
0& 0& 0 &  \ldots &0& 1&0&0 \\
1 & 2 & 3 &  \ldots&n-3 & \frac {n-4}2&-1&0\\
-2 &-3 & -4 &  \ldots &-(n-2) &\frac{n-2}2&1&1\\
\end{psmallmatrix} \]
and $BU^{-1}V^{-1} = (b_{l,k}) $ where 
\[ b_{l,k}=  \begin{cases}
\beta_l^{q^{k-1}}+k\beta_l^{q^{n-2}}-(k+1)\beta_l^{q^{n-1}} & \text{ if } 1\le l \le n,1\le k\le n-3;\\
\beta_l^{q^{k-1}}+\frac{n-4}2\beta_l^{q^{n-2}}-\frac{n-2}2\beta_l^{q^{n-1}} & \text{ if } 1\le l \le n,k=n-2;\\
-\beta_l^{q^{n-2}}+\beta_l^{q^{n-1}} & \text{ if } 1\le l \le n,k=n-1;\\
\beta_l^{q^{n-1}} & \text{ if } 1\le l \le n,k=n.\\
\end{cases}\]
We observe that the only non-zero entry in the last row is $b_{n,n}=1$. In order to determine the entries in the $(n-1)$-th row, we need the following result.

\textbf{Claim}: $\beta^{q^{l}}= l \beta^q-(l-1)\beta$, for $l\ge2$.

We will prove this by induction. For $l=2$, the result follows from  \eqref{beta1}. Let us assume that the claim is true for some $k\ge 2$. Then, for $l=k+1$, we have:
\begin{align*}
\beta^{q^{k+1}}  = (\beta^{q^k})^q &= (k \beta^q-(k-1)\beta)^q\\
&= k \beta^{q^2} - (k-1)\beta^q\\
& = 2k \beta^q-k\beta -(k-1)\beta^q\\
& = (k+1)\beta^q-k \beta,
\end{align*}
Thus, the claim holds for $l=k+1$, completing the induction step and proving the claim.

From this relation, we can prove  that the only entries  non-null of the $n-1$-th row of the matrix $BU^{-1}V^{-1}$ are $b_{n-1,n-1}$ and $b_{n,n-1}$. In fact,  using that  $\gcd(n,p)=p$,  we have that
$$\beta^{q^{n-2}}=(n-2)\beta^q-(n-3)\beta=-2\beta^q+3\beta$$
and
$$\beta^{q^{n-1}}=(n-1)\beta^q-(n-2)\beta=-\beta^q+2\beta.$$

Therefore  the entries $b_{n-1,l}$ for $1\le l \le n-3 $ are given  by

\begin{align*}
b_{n-1,l}& = \beta^{q^{l-1}}+l\beta^{q^{n-2}}-(l+1)\beta^{q^{n-1}} \\
& = (l-1)\beta^q-(l-2)\beta+l(-2\beta^q+3\beta)-(l+1)(-\beta^q+2\beta)\\
& = 0,\\
\end{align*}
the entry $b_{n-1,n-2}$ is 
{\small \begin{align*}
b_{n-1,n-2} & =  \beta^{q^{n-3}}+\frac{n-4}2\beta^{q^{n-2}}-\frac{n-2}2\beta^{q^{n-1}}\\
& =  \beta^{q^{n-3}}-2\beta^{q^{n-2}}+\beta^{q^{n-1}}\\
& = (n-3)\beta^q-(n-4)\beta -2(-2\beta^q+3\beta)-\beta^q+2\beta\\
& =0
\end{align*}}
and the entry $b_{n-1,n-2}$ is 
\begin{align*}
b_{n-1,n-1} & = -\beta^{q^{n-2}}+\beta^{q^{n-1}}\\
& = 2\beta^q-3\beta -\beta^q+2\beta\\
& = \beta^q-\beta.
\end{align*}

Consequently 
\[b_{n-1,k} = \begin{cases}
0 & \text{ if } 1\le k \le n-2;\\
\beta^q-\beta & \text{ if }  k=n-1;\\
\beta & \text{ if } k=n,
\end{cases}\]
and we obtain that 
{\small
\begin{align*}
A & = \frac{1}2BU^{-1}V^{-1}\left(
\begin{array}{c|c}
M_{n-2}  & 0\quad 0 \\ \hline
 \begin{array}{c}0 \cdots 0\\ 0 \cdots 0\end{array}& \begin{array}{cc}
 n&0\\
 0&0
 \end{array}
\end{array}\right) (V^{-1})^T(U^{-1})^TB^T\\
& = \frac{1}2 \left(\begin{array}{c|c}
\tilde B_{11} &\tilde B_{12}\\ \hline
\tilde B_{21} &\begin{array}{cc}
\beta^q-\beta& \beta\\
0&1\\
\end{array}
\end{array} \right)\left(
\begin{array}{c|c}
M_{n-2}  & 0\quad 0 \\ \hline
 \begin{array}{c}0 \cdots 0\\ 0 \cdots 0\end{array}& \begin{array}{cc}
 n&0\\
 0&0
 \end{array}
\end{array}\right) \left(\begin{array}{c|c}
\tilde B_{11}^T &\tilde B_{21}^T\\ \hline
\tilde B_{12}^T &\begin{array}{cc}
\beta^q-\beta& \beta\\
0&1\\
\end{array}
\end{array} \right)\\
& = \frac{1}2  \left(\begin{array}{c|c}
\tilde B_{11}M_{n-2}\tilde B_{11}^T&  \begin{array}{cc}0&0 \\ \vdots &\vdots\\ 0& 0\end{array}\\\hline 
 \begin{array}{c}0 \cdots 0\\ 0 \cdots 0\end{array} & \begin{array}{cc}
0&0\\
0&0\\
\end{array}
\end{array}\right),
\end{align*}}
where $\tilde B_{12} =\left( \begin{array}{ccc}
\beta_1^{q^{n-1}}-\beta_1^{q^{n-2}}& \beta_1\\
\vdots& \vdots\\
\beta_{n-2}^{q^{n-1}}-\beta_{n-2}^{q^{n-2}}& \beta_{n-2}
\end{array}\right)$ and $\tilde B_{21} = \left(\begin{array}{cccc}
0&0 & \cdots & 0\\
0&0 & \cdots & 0\\
\end{array}\right)$.
This implies that

\begin{align*}
\chi(\det(\tilde B_{11} ))& =\chi\left( \frac1{\beta^q-\beta}\det\left( \begin{array}{c|c}
\tilde B_{11} & \quad *\qquad *\\ \hline
\begin{array}{cc}0\\ 0\end{array}& \begin{array}{cc}
\beta^q-\beta& \beta\\
0&1\\
\end{array}
\end{array}\right)\right)\\
&= \chi\left(\frac{1}{\beta^q-\beta} \det(BU^{-1}V^{-1})\right)\\
&  = \chi\left(\frac{1}{\beta^q-\beta} \det(B)\right). \\
\end{align*}
\end{enumerate}
From the previous analysis, we can conclude that a reduced matrix of $A$ is given by $\tilde{B}{11}M{n-2}\tilde{B}_{11}^T$, which has a rank of $n-2$. We can now evaluate the quadratic character of the determinant of this matrix, i.e., 

\begin{align*}
\chi\left(\det \left(\tilde B_{11}M_{n-2} \tilde B_{11}^T\right)\right)& =\chi\left( \det\left(\left(\frac{1}{\beta^q-\beta}\right)BU^{-1}V^{-1}M_{n-2}\left(\frac{1}{\beta^q-\beta}\right)(V^{-1})^T(U^{-1})^TB^T)\right)\right) \\
& = \chi\left(\left(\frac{1}{\beta^q-\beta}\right)^2\det(BB^T)\det(M_{n-2})\right)\\
& = (-1)^{n-1}\chi\left((-1)^{n-1})\right),\\
\end{align*} 
where we use Propositions \ref{det} and \ref{qq}. Therefore, the proposition is proven.
\end{proof}

The following definition will be useful to determine $N_n(C_i)$.

\begin{definition}
For each  $\alpha \in \F_q$ we define
\[\varepsilon_{\alpha} = \begin{cases}
q-1 & \text{ if } \alpha =0; \\
-1 & \text{ otherwise.} \\
\end{cases}
\]
\end{definition}

From Theorem \ref{sol}  and  Proposition \ref{detB} we have the following theorem.
\begin{theorem}\label{m1}
Let $\lambda \in \fqn$ and $n$ a positive integer. The number $N_n(\mathcal C_{1})$ of affine rational points   in $\F_{q^n}^2$  of the curve determined by the equation  $y^q-y=x^{q+1}-x^2-\lambda$ is  
\[N_n(\mathcal C_1)= \left\{
\begin{array}{llll}
 q^{n}-  \chi(2(-1)^{\frac{n}2}n\Tr(\lambda))q^{(n+2)/2} \quad & \text{ if } \gcd(n,p) = 1 \text{ and } n \text{ is even;}\\
 q^{n}+ \varepsilon_{\Tr(\lambda)} \chi((-1)^{(n-1)/2}n)q^{(n+1)/2}  \quad &\text{ if } \gcd(n,p) = 1 \text{ and } n \text{ is odd;}\\
q^{n}-\varepsilon_{\Tr(\lambda)} \chi((-1)^{n/2}) q^{(n+2)/2} \quad &\text{ if } \gcd(n,p) = p \text{ and } n\text{ is even;}\\
q^{n}  +\chi(2(-1)^{\frac{n-3}2}\Tr(\lambda))q^{(n+3)/2}\quad &\text{ if } \gcd(n,p) = p \text{ and } n \text{ is odd.}
\end{array}\right.\]
\end{theorem}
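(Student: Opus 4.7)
The plan is to combine Hilbert's Theorem 90, Proposition \ref{pr2}, Proposition \ref{detB}, and Theorem \ref{sol}. By Hilbert's Theorem 90 we have $N_n(\mathcal{C}_1) = q \cdot N_n(Q_1)$, and by Proposition \ref{pr2} applied with $i = 1$, $N_n(Q_1)$ equals $S_{\Tr(\lambda)}$, the number of solutions in $\mathbb{F}_q^n$ of the quadratic form equation $X^T A X = \Tr(\lambda)$ for the symmetric matrix $A$ described there. Since $A$ is symmetric, the dimension of the radical of its associated bilinear form equals $v := n - \mathrm{rank}(A)$.

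Proposition \ref{detB} supplies exactly the two inputs that Theorem \ref{sol} requires: the value of $v$, equal to $1$ when $\gcd(n,p) = 1$ and to $2$ when $\gcd(n,p) = p$, and the quadratic character $\chi(\det A')$ of a reduced matrix. The case distinction in Theorem \ref{sol} is governed by the parity of $n + v$, and this produces exactly four sub-cases that match the four cases in the statement: for $\gcd(n,p) = 1$ the parity of $n+v$ is the opposite of that of $n$, whereas for $\gcd(n,p) = p$ it matches the parity of $n$.

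In each sub-case I would substitute $\chi(\Delta) = \chi(\det A')$ from Proposition \ref{detB} into the formula for $D$ supplied by Theorem \ref{sol}, simplify using $\chi(a^2) = 1$ (for instance, $\chi(2^{n-1}) = \chi(2)$ when $n$ is even and $\chi(2^{n-1}) = 1$ when $n$ is odd), and then collapse the remaining powers of $-1$ and of $n$ to match the announced form. Multiplication by $q$ finally yields $N_n(\mathcal{C}_1)$. The two cases $\Tr(\lambda) = 0$ and $\Tr(\lambda) \neq 0$ fuse into a single formula via the symbol $\varepsilon_{\Tr(\lambda)}$ precisely when $n + v$ is even (case (i) of Theorem \ref{sol}); when $n+v$ is odd the value $\Tr(\lambda) = 0$ contributes only $q^n$ and the non-zero case carries the character factor $\chi(\Tr(\lambda)\Delta)$, which accounts for the presence of $\chi(\Tr(\lambda))$ in the corresponding formulas.

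The main obstacle is not conceptual but consists in the careful bookkeeping of the powers of $-1$, $2$, and $n$ inside the quadratic character; in particular one must correctly handle the factor $\chi(2)^{n-1}$ arising from the $\tfrac{1}{2}$ scaling in the definition of $A$, the sign $(-1)^{(n \pm v \mp 1)/2}$ coming from Theorem \ref{sol}, and the sign $(-1)^{n-1}$ contributed by Proposition \ref{qq} through $\chi(\det(BB^T))$. Once these elementary identities are collected case by case, the four announced formulas for $N_n(\mathcal{C}_1)$ follow directly.
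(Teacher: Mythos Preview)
Your proposal is correct and follows precisely the route the paper takes: the paper's own proof of Theorem~\ref{m1} is the single sentence ``From Theorem~\ref{sol} and Proposition~\ref{detB} we have the following theorem,'' and your write-up simply unpacks this derivation (together with the preparatory relations $N_n(\mathcal C_1)=qN_n(Q_1)$ and Proposition~\ref{pr2}) and tracks the sign and character bookkeeping explicitly. One small remark: the contribution of Proposition~\ref{qq} is already absorbed into the statement of Proposition~\ref{detB}, so you need not invoke it separately.
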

This theorem allows us to determine when $\mathcal C_{1}$ is minimal or maximal with respect  the Hasse-Weil bound,  as we show in  the following corollary. 
\begin{theorem}
Consider the curve  $\mathcal C_1$ given by
\[\mathcal C_1 : y^q -y = x^{q+1}-x^2-\lambda.\]
Then $\mathcal C_1$ is minimal  in $\F_{q^n}$ if and only if one of the following holds
\begin{enumerate}[a)]
\item $\Tr(\lambda) =0$,  $2p$ divides $n$ and $q\equiv 1 \pmod 4$; 
\item $\Tr(\lambda) =0$,  $4p$ divides $n$ and $q\equiv 3 \pmod 4$.
\end{enumerate}
Moreover, $\mathcal C_1$ is maximal in $\F_{q^n}$ if and only if $\Tr(\lambda) =0,$ $2p$ divides $n$, $4$ does not divide $n$  and $q \equiv 3 \pmod 4.$
\end{theorem}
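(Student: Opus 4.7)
The plan is to read the result directly off Theorem~\ref{m1} by comparing its four case formulas against the Hasse--Weil bound. Since $f(x) = x^{q+1} - x^2 - \lambda$ has degree $q+1$ coprime to $p$, the smooth projective model of $\mathcal{C}_1$ has genus $g = q(q-1)/2$ and a unique ($\fq$-rational) point at infinity, so the Hasse--Weil bound on the projective model descends to the affine bound
\[
|N_n(\mathcal{C}_1) - q^n| \le 2g\, q^{n/2} = q(q-1)\,q^{n/2},
\]
and $\mathcal{C}_1$ is minimal (resp.\ maximal) over $\fqn$ precisely when $N_n(\mathcal{C}_1)$ hits the lower (resp.\ upper) bound.

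First I would rule out three of the four cases of Theorem~\ref{m1}. In those three cases the deviation $|N_n(\mathcal{C}_1) - q^n|$ is bounded above by $q^{(n+2)/2}$, $(q-1)q^{(n+1)/2}$ or $q^{(n+3)/2}$ respectively. The elementary inequality $(q-1) > \sqrt{q}$, valid for every odd prime power $q \ge 3$ (equivalently $q^2 - 3q + 1 > 0$), shows that each of these quantities is strictly smaller than $q(q-1)q^{n/2}$. Hence extremality forces $\gcd(n,p) = p$ together with $n$ even, i.e.\ $2p \mid n$ (since $p$ is odd).

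In that regime Theorem~\ref{m1} reads
\[
N_n(\mathcal{C}_1) = q^n - \varepsilon_{\Tr(\lambda)}\,\chi((-1)^{n/2})\,q^{(n+2)/2}.
\]
If $\Tr(\lambda) \neq 0$ then $\varepsilon_{\Tr(\lambda)} = -1$ and the deviation collapses to $q\cdot q^{n/2}$, again below the Weil bound; so extremality forces $\Tr(\lambda) = 0$. Then $\varepsilon_{0} = q-1$ and the deviation equals $q(q-1)q^{n/2}$ exactly, giving minimality when $\chi((-1)^{n/2}) = 1$ and maximality when $\chi((-1)^{n/2}) = -1$.

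To conclude I would translate $\chi((-1)^{n/2})$ into arithmetic on $n$ and $q$: if $4 \mid n$ then $(-1)^{n/2} = 1$ and $\chi((-1)^{n/2}) = 1$ for any $q$; if $n \equiv 2 \pmod 4$ then $(-1)^{n/2} = -1$, and $\chi(-1) = 1$ iff $q \equiv 1 \pmod 4$. Intersecting these possibilities with $2p \mid n$ recovers exactly conditions (a) and (b) for minimality and the stated condition for maximality. The one delicate step is setting up the Hasse--Weil bound in this affine setting (the genus formula $g = (q-1)(\deg f - 1)/2$ for Artin--Schreier covers with $\gcd(\deg f, p) = 1$ and the unique point at infinity); once that is in place, the rest is routine bookkeeping on Theorem~\ref{m1}.
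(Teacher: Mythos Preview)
Your proposal is correct and takes essentially the same approach as the paper: the paper's proof is the single sentence ``The result follows from Theorem~\ref{m1} and the fact that the genus of $\mathcal C_1$ is $g= \frac{q(q-1)}2$,'' and you have simply written out the case analysis that this sentence hides. Your handling of the three non-extremal cases via the inequality $q-1>\sqrt{q}$ and the translation of $\chi((-1)^{n/2})$ into congruence conditions on $n$ and $q$ are exactly what is needed to unpack that sentence.
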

\begin{proof}
The result follows from Theorem \ref{m1} and the fact that the genus of $\mathcal C_1$ is $g= \frac{q(q-1)}2$.
\end{proof}

\subsection{The curve $y^q-y=x(x^{q^i}-x)-\lambda$ with $i\ge1$.}

Now let us consider the case $i \geq 1$. The following proposition provides information about the rank of the matrices $M_{n,i}$ and the quadratic character of the determinant of one of its reduced matrices.

\begin{proposition}\label{det2}
Let $i,n$ be integers such that $0<i< n$ and $M_{n,i}$ as defined in \eqref{Mni}. Set  $d= \gcd(i,n)$ and $ l = \frac{n}d$. The rank  of the $n\times n$ matrix $M_{n,i}$ is $n-d$ if $n=2i$ and, otherwise we have that 
\[\text{rank } M_{n,i} = \left\{
\begin{array}{ll}
n-d\quad &\text{ if } \gcd(l,p) = 1;\\
 n-2d \quad &\text{ if } \gcd(l,p) = p.
\end{array}\right.\]

In addition, the matrices
\begin{equation}\label{diagonal}
\tilde M_{n,i}=\left(
\begin{array}{c|c|c|c}
M_{l,1}  & 0&0 & 0 \\ \hline
 0& M_{l,1}&0&0\\ \hline 
 \vdots & \cdots & \ddots& \vdots\\ \hline
 0&0&0& M_{l,1}\\ 
 \end{array}\right) \quad \text{ and } \quad 
 \tilde M'_{n,i}=\left(
\begin{array}{c|c|c|c}
M'_{l,1}  & 0&0 & 0 \\ \hline
 0& M'_{l,1}&0&0\\ \hline 
 \vdots & \cdots & \ddots& \vdots\\ \hline
 0&0&0& M'_{l,1}\\ 
 \end{array}\right) 
 \end{equation}
 are an equivalent matrix and a reduced matrix of $M_{n,i}$, respectively, where $\tilde M'_{l,1}$ is as the matrix given in Proposition \ref{det}. 

The determinant of the matrix $\tilde M_{n,i}$ is $(-1)^{i}2^i$ if $n=2i$ and, otherwise we have that 
\[ \det \tilde M'_{n,i} = \left\{
\begin{array}{ll}
 (-1)^{n-d}l^d \quad &\text{ if } \gcd(l,p) = 1;\\
 (-1)^{n} \quad& \text{ if } \gcd(l,p) = p.
\end{array}\right.  \]
\end{proposition}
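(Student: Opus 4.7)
The plan is to exhibit an explicit permutation of indices that block-diagonalises $M_{n,i}$, and then handle each block separately via Proposition~\ref{det}.

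First I would observe that the cyclic permutation $\mathcal{P}^i$ partitions $\{0,1,\dots,n-1\}$ into $d=\gcd(i,n)$ orbits under the shift $j\mapsto j+i\pmod{n}$, each of cardinality $l=n/d$. Let $P$ be the permutation matrix that re-indexes by listing these orbits consecutively, each in the natural order $j_k,\, j_k+i,\, j_k+2i,\dots,j_k+(l-1)i$ modulo $n$. Since $P^{-1}=P^T$, the matrix $P M_{n,i} P^T$ is an equivalent symmetric matrix, and a direct check shows it is block diagonal with $d$ blocks of size $l\times l$, each equal to $\mathcal{P}_l^T + \mathcal{P}_l - 2\,\text{Id}_l$, where $\mathcal{P}_l$ is the $l\times l$ cyclic permutation matrix (this comes from the fact that the shift by $i$ restricts to a shift by $1$ on any orbit, once points are identified with $\mathbb{Z}/l$).

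The next step is to split into two cases. When $l\geq 3$ (equivalently $n\neq 2i$), the shifts $+1$ and $-1$ on $\mathbb{Z}/l$ land at distinct positions, so each block coincides with $M_{l,1}$. Applying Proposition~\ref{det} to a single block gives its rank and the explicit reduced determinant $\det M'_{l,1}$; passing to the block-diagonal matrix yields $\text{rank}(M_{n,i}) = d\cdot \text{rank}(M_{l,1})$ and $\det \tilde M'_{n,i} = (\det M'_{l,1})^d$. Using the identities $d(l-1)=n-d$ and $d(l-2)=n-2d$, the stated rank and determinant formulas drop out in both the $\gcd(l,p)=1$ and $\gcd(l,p)=p$ subcases.

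The case $n=2i$ (so $l=2$) must be treated separately, since the shifts $+1$ and $-1$ coincide in $\mathbb{Z}/2$ and each block is $\begin{psmallmatrix}-2 & 2\\ 2 & -2\end{psmallmatrix}$ rather than $M_{2,1}$. A short Gaussian step, for instance conjugating by $\begin{psmallmatrix}1 & 1\\ 0 & 1\end{psmallmatrix}$, shows this block has rank $1$ and reduces to the $1\times 1$ scalar $(-2)$. Summing over the $d=i$ blocks then gives $\text{rank}(M_{n,i}) = i = n-d$ and reduced determinant $(-2)^i = (-1)^i 2^i$, as claimed. The main obstacle is isolating and handling this degenerate case $l=2$, which falls outside the generic block pattern; once it is set aside, the remaining work is a mechanical application of Proposition~\ref{det} to the block-diagonal decomposition.
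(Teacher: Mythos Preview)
Your proposal is correct and follows essentially the same approach as the paper: conjugate $M_{n,i}$ by a permutation matrix arising from the orbit structure of the shift $j\mapsto j+i$ on $\mathbb{Z}_n$ to obtain a block-diagonal matrix with $d$ blocks, then invoke Proposition~\ref{det} on each block, treating the degenerate case $l=2$ (i.e.\ $n=2i$) separately. The paper's implementation is slightly different in that it first uses one permutation to reduce from $i$ to $d=\gcd(i,n)$ and then a second explicit permutation $\varphi(sd+r)=s+lr$ to block-diagonalise, whereas you go directly via the orbit listing; the content is the same.
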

\begin{proof}
For convenience, let us enumerate the rows and columns of the matrix $M_{n,i}$ from $0$ to $n-1$. Suppose that $n$ is even and $i = \frac{n}{2}$. In this case, the matrix $M_{n,i}$ can be expressed as follows
\[a_{k,l} = \begin{cases}
-2 &\text{ if } k=l,\\
2 &\text{ if } k-l \equiv 0 \pmod {\frac{n}2},\\
0 &\text{ otherwise. }
\end{cases}\]
Let us denote $D_{\frac{n}{2}} = 2 \cdot \text{Id}_{\frac{n}{2}}$, where $\text{Id}_{\frac{n}{2}}$ is the $\frac{n}{2} \times \frac{n}{2}$ identity matrix. It follows that

\[M_{n,n/2} = \left(\begin{array}{c|c}
-D_{n/2}&D_{n/2}\\ \hline
D_{n/2}& -D_{n/2} \\ 
\end{array}\right) \text{ that is equivalent to } \left(\begin{array}{c|c}
-D_{n/2}&0\\ \hline
0& 0 \\ 
\end{array}\right).\]
Therefore
\[\text{ rank } M_{n,\frac{n}2} = \frac{n}2=i \quad \text{ and } \quad \text{ det } \tilde M'_{n, \frac{n}2} = (-2)^{\frac{n}2}=(-2)^i\neq 0.\]

This completes the proof for the case $n=2i$. For the remaining cases, we will first establish that it suffices to consider the case where $i=d$. Subsequently, we will construct a block diagonal matrix consisting of $d$ matrices of the form $M_{l,1}$, where $n=ld$.

We observe that any permutation $\rho:\mathbb{Z}_n \rightarrow \mathbb{Z}_n$ induces a natural action on $\mathbb{F}_{q^n}$ through the following map:

$$\begin{matrix}
\rho: & \fq^n &\to& \fq^n\\
& (v_0, \dots, v_{n-1})& \mapsto& (v_{\rho(0)}, \dots, v_{\rho(n-1)}).
\end{matrix}$$

This action is associated with an invertible matrix $M_{\rho}$ defined by

\[M_{\rho} \left(\begin{matrix}
v_0\\\vdots\\v_{n-1}
\end{matrix}\right) =\left(\begin{matrix}
v_{\rho(0)}\\\vdots\\v_{\rho(n-1)}
\end{matrix}\right) .\]

Conversely, for any permutation matrix $R$, there exists a permutation $\rho': \mathbb{Z}_n \rightarrow \mathbb{Z}_n$ such that $M_{\rho'} = R$. In particular, we observe that the map $\mathcal{P}^i$ determines the permutation

$$\begin{matrix}\pi_i:& \Z_n & \to & \Z_n \\
&z & \mapsto & z+i.
\end{matrix}$$

Consider the map
\begin{align*}
\sigma:  \Z_n &\to \Z_n\\
a & \mapsto ai
 \end{align*}
where $a \in \mathbb{Z}_n$ is an element satisfying $\gcd(a,n) = 1$. Since $a$ and $n$ are coprime, $\sigma$ is a permutation. Furthermore, $\sigma$ induces a matrix $M_{\sigma}$, and the matrix $M_{\sigma} \mathcal{P}^i M_{\sigma}^{-1}$ corresponds to a permutation of $\mathbb{Z}_n$ given by
\begin{align*}
\sigma \circ \pi_i \circ \sigma^{-1} (z) & = \sigma(\pi_i(\sigma^{-1}(z)))\\
& =  \sigma(\pi_i(a^{-1}z))\\
&=\sigma(a^{-1}z+i)\\
& = z+ai.
\end{align*}

We know that the congruence $ai \equiv u \pmod{n}$ has a solution if and only if $d$ divides $u$. Since $d = \gcd(i,n)$, there exists $a \in \mathbb{Z}_n$ such that $\sigma \circ \pi_i \circ \sigma^{-1}(z) = z + \gcd(i,n)$. Therefore, we can replace $i$ with $d$ without loss of generality.
For each $z \in \mathbb{Z}_n$, by the Euclidean Division Algorithm, there exist unique integers $r$ and $s$ such that $0 \leq s \leq l-1$ and $0 \leq r \leq d-1$ satisfying $z = sd + r$. Let us consider the map

\begin{equation}
\begin{matrix}
\varphi:&  \Z_n  &\to & \Z_n\\
&sd+r &\mapsto&  s+lr.
\end{matrix} 
\end{equation}

\textbf{Claim:} The map $\varphi$ is a permutation of the elements of $\mathbb{Z}_n$. Let us suppose that there exist distinct elements $z_1$ and $z_2$ in $\mathbb{Z}_n$ such that $\varphi(z_1) = \varphi(z_2)$. By the Euclidean Division, there exist $0 \leq s_1, s_2 \leq l-1$ and $0 \leq r_1, r_2 \leq d-1$ with $z_1 = s_1d + r_1$ and $z_2 = s_2d + r_2$. Then we have
\[\varphi(s_1d+r_1) =  \varphi(s_2d+r_2) \Leftrightarrow s_1+lr_1=s_2+lr_2 \Leftrightarrow s_1-s_2=l(r_2-r_1).\]
Since $0 \leq s_1, s_2 \leq l-1$, the above equation implies that $s_1 = s_2 = 0$ and $r_1 = r_2 = 0$. However, this contradicts the fact that $z_1 \neq z_2$. Therefore, $\varphi$ is a permutation.

We will utilize the permutation $\varphi$ to rearrange the rows and columns of $\mathcal{P}^d - 2\mathrm{Id} + \left(\mathcal{P}^d\right)^T$ and obtain the block diagonal matrix $\tilde{M}_{n,i}$ with $d$ blocks. Applying the permutation $\varphi$ to this matrix, we obtain the permuted matrix $\varphi(\mathcal{P}^d - 2\mathrm{Id} + \left(\mathcal{P}^d\right)^T)$.
This permuted matrix can be written as a block diagonal matrix with $d$ blocks, where each block is a cyclic permutation matrix of size $l \times l$:
\[ \tilde{M}_{n,i} = \begin{pmatrix}
M_{l,1} & 0 & \cdots & 0 \\
0 & M_{l,1} & \cdots & 0 \\
\vdots & \vdots & \ddots & \vdots \\
0 & 0 & \cdots & M_{l,1} \\
\end{pmatrix},\]
where $n = ld$. Thus, we have successfully rearranged the rows and columns to obtain the block diagonal matrix $\tilde{M}_{n,i}$ with $d$ blocks, where each block is a cyclic permutation matrix $M_{l,1}$ of size $l \times l$.

Let us observe that $\varphi \circ \pi_d \circ \varphi^{-1}$ defines a permutation $\theta: \Z_n \to \Z_n$ given by
\begin{align*}
\theta(z) = \varphi \circ \pi_d \circ \varphi^{-1}(z) & = \varphi \circ \pi_d (\varphi^{-1}(s+rl))\\
& = \varphi \circ \pi_d (sd+r)\\
& = \varphi ((s+1)d+r)\\
& = (s+1) + rl,
\end{align*} 
where $z=sl+r$ with $0\le s \le l-1$ and $0\le r \le d-1$. 
We  know that   $$M_{\varphi}\left(\bigl(\mathcal{P}^d\bigr)^{^{\ }_T} -2 Id +\mathcal P^d\right)M_{\varphi}^{-1}= \tilde M_{n,d}.$$ 
To demonstrate this, we show that the product of the permutation matrix $M_{\varphi}$ for $\left(\mathcal{P}^d\right)^{T} -2 \mathrm{Id} +\mathcal P^d$ and $M_{\varphi}^{-1}$ maps the nonzero entries of $M_{n,d}$ to the nonzero entries of $\tilde{M}_{n,d}$.
In the case when $k=j$, writing $k = s+rl$ with $0 \leq s \leq l-1$ and $0 \leq r \leq d-1$, we have
\[ \theta(a_{k,k}) = a_{(s+1)+rl,(s+1)+rl}=a_{\theta(k),\theta(k)},\]
which implies that the new matrix will have $a_{k,k} = -2$.
If $a_{k,k+d} = 1$, with $k = s+rl$ and $0 \leq s \leq l-1$, we have

\[ \theta(a_{k,k+d}) = a_{(s+1)+rl,(s+2)+rl} = a_{\theta(k),\theta(k+d)},\]
which implies that $a_{k,k+1} = 1$ when considering the indices modulo $l$. The same procedure applies to $a_{k+d,k}$, as it is the transpose of $a_{k,k+d}$. The other entries are zero and their images are also zero. Therefore, we obtain the matrix in  \eqref{diagonal}.

Using Proposition \ref{det} and the fact that the matrix $M_{n,i}$ is a block diagonal matrix with $d$ blocks equal to the matrix $M_{l,1}$, we can determine the rank of $M_{n,i}$ and the determinant of the reduced matrix $\tilde{M}'_{n,i}$. In conclusion, we obtain that 
 \[\text{ rank } M_{n,i} = \left\{\begin{array}{ll}
(l-1)d = n-d\quad \text{ if } \gcd(l,p) = 1;\\
 (l-2)d =n-2d \quad \text{ if } \gcd(l,p) = p,
\end{array}\right.\]
and 
\[ \det \tilde M_{n,i} = \left\{
\begin{array}{ll}
 \left((-1)^{l-1}l\right)^d  = (-1)^{n-d}l^d\quad &\text{ if } \gcd(l,p) = 1;\\
 \left((-1)^{l-1}\right)^d  = (-1)^{n-d} \quad& \text{ if } \gcd(l,p) = p.
\end{array}\right.  \]

\end{proof}
From Proposition \ref{pr2}, we know that the matrix associated with the quadratic form $\mathrm{Tr}(c x(x^{q^i}-x))$ is given by

\[A= \frac{c}2B(\p^T-2Id+\p)B^T\] 
where $B$ is defined in \eqref{B}. By applying Propositions \ref{detB} and \ref{det2}, we obtain the following result.

\begin{corollary}\label{b1}
Let $i$ be an integer such that $0<i<n$. Set $d=\gcd(i,n)$ and $l = \frac{n}d$. The rank of the $n\times n$ matrix $A= \frac{1}2B(\left(\mathcal{P}^i\right)^{^{\ }_T} -2 Id +\mathcal P^i)B^T$  is given by
\[\text{rank } A = \begin{cases}
n-d & \text{ if } \gcd(l,p)=1;\\
n-2d & \text{ if } \gcd(l,p)=p. 
\end{cases}\]
Let $A'$ be a reduced matrix of $A$. Then
\[\chi(\det(A')) =  \begin{cases}
(-1)^{n-d}\chi((-2)^{n-d}l^d) & \text{ if } \gcd(l,p)=1;\\
(-1)^{n-d}\chi((-1)^{n-d}2^{n-2d}) & \text{ if } \gcd(l,p)=p.
\end{cases}\]

\end{corollary}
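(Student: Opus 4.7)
The plan is to combine Proposition \ref{det2}, which describes the structure of $M_{n,i}$, with the bilinear-form manipulations used in Proposition \ref{detB}.

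For the rank, the matrix $B$ in \eqref{B} is invertible (its rows are the Frobenius conjugates of the basis $\{\beta_j\}$, hence linearly independent over $\fqn$), so conjugation by $B$ preserves rank and $\operatorname{rank}(A)=\operatorname{rank}(M_{n,i})$. The claimed rank formulas then follow directly from Proposition \ref{det2}. The exceptional case $n=2i$ fits into the $\gcd(l,p)=1$ branch since then $l=2$ and $p$ is odd, giving rank $n-d$.

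For $\chi(\det A')$, Proposition \ref{det2} produces an invertible $P\in\mathrm{GL}_n(\fq)$---built from the permutation $M_\varphi$, the block-diagonal matrix $\tilde U$ assembled from the reductions $U$ (or $VU$) of Proposition \ref{det}, and a final permutation collecting zero columns---such that $PM_{n,i}P^T=\operatorname{diag}(\tilde M'_{n,i},0)$. Then $2A=(BP^{-1})\operatorname{diag}(\tilde M'_{n,i},0)(BP^{-1})^T$. Writing $BP^{-1}=[E_1\mid E_2]$ with $E_1$ of size $n\times r$, we get $A=\tfrac{1}{2}E_1\tilde M'_{n,i}E_1^T$. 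Choosing $T\in\mathrm{GL}_n(\fq)$ so that $TAT^T=\operatorname{diag}(A',0)$ with $A'$ invertible of size $r\times r$, and letting $F_1$ denote the top $r\times r$ block of $TE_1$, we obtain $A'=\tfrac{1}{2}F_1\tilde M'_{n,i}F_1^T$, so
\[\chi(\det A')=\chi(2^r)\cdot\chi(\det F_1^2)\cdot\chi(\det\tilde M'_{n,i}).\]
Substituting $\det\tilde M'_{n,i}$ from Proposition \ref{det2} reduces the problem to computing $\chi(\det F_1^2)$.

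The main obstacle is precisely this last quantity. Although $\det F_1\in\fqn$, the product $\det F_1^2$ lies in $\fq^*$ (since $\det A'$ and $\det\tilde M'_{n,i}$ do), and its $\chi$-value is a Galois-theoretic question: does $\det F_1$ itself lie in $\fq$, or only in a quadratic extension? The strategy is to generalize the argument of Proposition \ref{qq} (which gave $\chi(\det BB^T)=(-1)^{n-1}$): by choosing the basis $\{\beta_j\}$ so that the $d$ (respectively $2d$) basis elements corresponding to the final zero block of the reduction play the same role as $\beta_n=1$ (and $\beta_{n-1}=\beta$) in Proposition \ref{detB}, the matrix $BP^{-1}$ becomes block-triangular, from which $\det F_1$ is expressible, up to $\fq^*$-squares, in terms of $\det B$ and easily computable determinants of the "anchor" basis elements. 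Tracking the Frobenius action through $M_\varphi$ and the $d$-fold block reduction is expected to yield $\chi(\det F_1^2)=(-1)^{n-d}$---an extra sign of $(-1)^{d-1}$ beyond the $d=1$ case handled in Proposition \ref{detB}---and then substituting into the display above and invoking Proposition \ref{det2}'s values of $\det\tilde M'_{n,i}$ produces the two cases $(-1)^{n-d}\chi((-2)^{n-d}l^d)$ and $(-1)^{n-d}\chi((-1)^{n-d}2^{n-2d})$ stated in the corollary.
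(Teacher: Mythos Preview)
Your rank argument is fine: $B$ is invertible over $\fqn$, so conjugation preserves rank, and Proposition~\ref{det2} gives the stated values.

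The determinant argument, however, has a genuine gap. You correctly reduce everything to the single quantity $\chi((\det F_1)^2)$ and correctly flag it as ``the main obstacle''---but then you do not compute it. The sentence ``Tracking the Frobenius action \ldots is expected to yield $\chi(\det F_1^2)=(-1)^{n-d}$'' is an assertion, not a proof. The strategy you sketch (choose $d$ or $2d$ special basis elements so that $BP^{-1}$ becomes block-triangular, then imitate Proposition~\ref{qq}) is reasonable, but carrying it out is essentially the whole content of the corollary, and several points are untouched:
\begin{itemize}
\item You would need to exhibit an explicit $\fq$-basis of the radical adapted to the block decomposition of Proposition~\ref{det2}; in the $p\mid l$ case this is a $2d$-dimensional space, and producing elements that play the role of $1$ and $\beta$ in each block is not automatic.
\item Even granting block-triangularity, one gets $(\det F_1)^2=(\det B)^2/(\det G_2)^2$ with $G_2$ the lower-right block. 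In the $d=1$ cases of Proposition~\ref{detB} one checks $\det G_2\in\fq^*$ so $\chi((\det G_2)^2)=1$; for general $d$ your claimed extra sign $(-1)^{d-1}$ would force $\chi((\det G_2)^2)=(-1)^{d-1}$, and nothing in your sketch explains why.
\item The passage from $i$ to $d$ in Proposition~\ref{det2} is a permutation $M_\sigma$ conjugating $M_{n,i}$ to $M_{n,d}$. This permutes the Frobenius-exponent columns of $B$, which is \emph{not} an $\fq$-change of basis of $\fqn$; so ``WLOG $i=d$'' at the level of the quadratic form $A$ still needs justification.
\end{itemize}

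The paper itself offers no detailed argument here---it simply cites Propositions~\ref{detB} and~\ref{det2}---so there is no explicit proof to compare against. But your proposal, as written, stops precisely where the real work begins.
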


By Theorem \ref{sol}  and  Propositions \ref{pr2} and \ref{det2} we have the following theorem.
\begin{theorem}\label{mm}
Let $n,i$ be integers such that $0<i<n$ and put $d= \gcd(i,n)$ and $ l = \frac{n}d$.  If $n=2i$, the number $N_n(\mathcal C_{i})$ of affine rational points   in $\F_{q^n}^2$  of the curve determined by the equation  $y^q-y=x^{q^i+1}-x^2-\lambda$ is  
\[N_n(\mathcal C_{i})= \left\{
\begin{array}{llll}
 q^{n}+  \chi(2(-1)^{(i+1)/2}\Tr(\lambda)) q^{(3i+1)/2}\quad & \text{ if } i \text{ is odd;}\\
 q^{n}+ \varepsilon_{\Tr(\lambda)} \chi((-1)^{i/2}) q^{3i/2} \quad &\text{ if } i  \text{  is even.}\\
\end{array}\right.\]
If $n\neq 2i$, the number of affine rational points of $\mathcal C_i$ is 
\[N_n(\mathcal C_{i})= \left\{
\begin{array}{llll}
 q^{n}- \chi(2(-1)^{(n-d+1)/2}\Tr(\lambda) l^d) q^{(n+d+1)/2} \quad & \text{ if } \gcd(l,p) = 1 \text{ and } n+d \text{ is odd;}\\
 q^{n}+ \varepsilon_{\Tr(\lambda)} \chi((-1)^{(n-d)/2}l^d) q^{(n+d)/2} \quad &\text{ if } \gcd(l,p) = 1  \text{ and $n+d$ is even;}\\
q^{n}-(-1)^d  \chi(2(-1)^{(n+1)/2}\Tr(\lambda)) q^{(n+2d+1)/2}  \quad &\text{ if } \gcd(l,p) = p \text{ and } n \text{ is odd;}\\
q^{n}+(-1)^d \varepsilon_{\Tr(\lambda)} \chi((-1)^{n/2})q^{(n+2d)/2} \quad &\text{ if } \gcd(l,p) = p \text{ and } n\text{ is even.}\\
\end{array}\right.\]
\end{theorem}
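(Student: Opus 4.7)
The proof will combine Proposition \ref{pr2} (which rewrites the counting problem as a quadratic-form equation), Hilbert's Theorem 90 (which gives $N_n(\mathcal C_i)=q\,N_n(Q_i)$, as recalled at the start of Section 3), Corollary \ref{b1} and the $n=2i$ portion of Proposition \ref{det2} (which supply the two invariants $v$ and $\chi(\Delta)$), and finally Theorem \ref{sol} (which converts this data into the solution count). First I would translate, via Proposition \ref{pr2}, the number $N_n(Q_i)$ into the number of $x\in\fq^n$ satisfying $x^T A x=\Tr(\lambda)$, where $A=\tfrac12 B M_{n,i}B^T$.

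Next I would read off the inputs of Theorem \ref{sol}. Let $v=n-\text{rank}(A)$. When $n\neq 2i$, Corollary \ref{b1} gives
\[
v=\begin{cases} d & \gcd(l,p)=1,\\ 2d & \gcd(l,p)=p,\end{cases}\qquad
\chi(\Delta)=\begin{cases}(-1)^{n-d}\chi\bigl((-2)^{n-d}l^d\bigr) & \gcd(l,p)=1,\\ (-1)^{n-d}\chi\bigl((-1)^{n-d}2^{n-2d}\bigr) & \gcd(l,p)=p.\end{cases}
\]
When $n=2i$, Proposition \ref{det2} yields $v=i$ with $\det\tilde M_{n,i}=(-1)^i 2^i$, so $\chi(\Delta)=\chi((-2)^i)$.

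Now I would apply Theorem \ref{sol} with $\alpha=\Tr(\lambda)$, splitting on the parity of $n+v$: if $n+v$ is even I use \eqref{casopar}, collapsing the two subcases $\Tr(\lambda)=0$ and $\Tr(\lambda)\neq 0$ into a single expression $q^{n-1}+D\,q^{(n+v-2)/2}\varepsilon_{\Tr(\lambda)}$ via the definition of $\varepsilon_\alpha$; if $n+v$ is odd I use \eqref{casoimpar}, where the case $\Tr(\lambda)=0$ contributes $q^{n-1}$ and hence $q^n$ after multiplying by $q$, while $\Tr(\lambda)\neq 0$ contributes the term involving $\chi(\Tr(\lambda)\Delta)$. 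Multiplying through by $q$ (Hilbert 90) and matching the four combinations (two values of $\gcd(l,p)$ by two parities of $n+d$, respectively $n$) yields the four lines in the $n\neq 2i$ case; the $n=2i$ case is handled the same way using $v=i$ and $\chi(\Delta)=\chi((-2)^i)$, producing the two displayed formulas according as $i$ is odd or even.

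The main obstacle is purely bookkeeping: aligning the $(-1)^{(n-v)/2}$ or $(-1)^{(n-v-1)/2}$ sign that Theorem \ref{sol} attaches to $\Delta$ with the signs $(-1)^{n-d}$ and $\chi((-2)^{n-d})$ already present inside $\chi(\Delta)$ from Corollary \ref{b1}, and then absorbing the resulting powers of $2$ and $-1$ so that the final characters simplify to $\chi(2(-1)^{(n-d+1)/2}\Tr(\lambda)l^d)$, $\chi((-1)^{(n-d)/2}l^d)$, $\chi(2(-1)^{(n+1)/2}\Tr(\lambda))$, and $\chi((-1)^{n/2})$. Using that $\chi$ is multiplicative and $\chi(a^2)=1$, each reduction is routine but needs careful separation of the two parity regimes, especially because substituting $v=d$ versus $v=2d$ shifts the parity of $(n-v)/2$ by $d/2$ in the $\gcd(l,p)=p$ case; keeping track of the resulting factor $(-1)^d$ is what produces the prefactor $(-1)^d$ in the last two lines of the theorem.
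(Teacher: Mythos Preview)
Your proposal is correct and follows essentially the same route as the paper, whose proof is literally the single sentence ``By Theorem \ref{sol} and Propositions \ref{pr2} and \ref{det2} we have the following theorem.'' You have simply made explicit the chain Proposition \ref{pr2} $\to$ Corollary \ref{b1} $\to$ Theorem \ref{sol} $\to$ multiply by $q$ via Hilbert 90, together with the parity split and the sign-tracking that the paper leaves entirely to the reader.

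One small caution on the $n=2i$ case: you invoke Proposition \ref{det2} to obtain $\det\tilde M'_{n,i}=(-1)^i2^i$ and then set $\chi(\Delta)=\chi((-2)^i)$, but the $\Delta$ in Theorem \ref{sol} is the determinant of a reduced form of $A=\tfrac12\,B M_{n,i}B^T$, not of $M_{n,i}$ itself; the passage from one to the other picks up the factor $(-1)^{n-1}$ of Proposition \ref{qq} and the power of $\tfrac12$ coming from the scalar. The cleanest fix is simply to observe that when $n=2i$ one has $d=i$, $l=2$ and $\gcd(l,p)=1$ (since $p$ is odd), so this case is already absorbed by the first line of Corollary \ref{b1}, giving $\chi(\Delta)=(-1)^{i}\chi\bigl((-2)^{i}\,2^{i}\bigr)=(-1)^i\chi\bigl((-1)^i\bigr)$. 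With that adjustment the bookkeeping you outline (collapsing \eqref{casopar} via $\varepsilon_{\Tr(\lambda)}$, absorbing even powers of $2$ and $-1$, and isolating the stray $(-1)^d$ in the $\gcd(l,p)=p$ lines) goes through exactly as you describe.
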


\begin{remark}
The curve 
$\mathcal C_{i}$ has genus $g = \frac{(q-1)q^i}2$.
The Hasse-Weil bound of $\mathcal C_{i}$ is given by 
\[|N_n(\mathcal C_{i})-q^n|\le (q-1)q^{\frac{n+2i}2}. \]
\end{remark} 
Using Theorem \ref{mm}, we can determine the conditions when the curve $\mathcal C_i$ is maximal (or minimal) with respect the Hasse-Weil bound. 
\begin{theorem}
Let $n,i$ be integers such that $0<i<n$, set $d= \gcd(i,n)$ and $ l = \frac{n}d$. The curve 
$$\mathcal C_{i}: y^q-y = x(x^{q^i}-x)-\lambda$$ is maximal in $\F_{q^n}$ if and only if $\Tr(\lambda)=0$, $2p$ divides $n$, $i$ divides $n$ and $(-1)^d\chi((-1)^{n/2})=1$.
The curve $\mathcal C_i$ is minimal in $\F_{q^n}$ if and only if $\Tr(\lambda)=0$, $2p$ divides $n$, $i$ divides $n$ and $(-1)^{d}\chi((-1)^{n/2})=-1$. 
\end{theorem}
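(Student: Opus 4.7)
The plan is to read off the maximality and minimality conditions directly from Theorem \ref{mm}, by determining in which of its sub-cases the value $N_n(\mathcal{C}_i)$ can attain the Hasse-Weil extremes $q^n\pm(q-1)q^{(n+2i)/2}$ (recall $g = (q-1)q^i/2$).

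First I would reduce to the cases carrying a factor of $q-1$. The correction $N_n(\mathcal{C}_i)-q^n$ in Theorem \ref{mm} is always a product of a power of $q$, a value of $\chi$ (of absolute value $1$), and possibly $\varepsilon_{\Tr(\lambda)}$. Since $\varepsilon_{\Tr(\lambda)}=q-1$ precisely when $\Tr(\lambda)=0$ and equals $-1$ otherwise, the required factor $(q-1)$ forces $\Tr(\lambda)=0$ and locates us in one of the three $\varepsilon$-sub-cases of Theorem \ref{mm}.

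Next I would match the $q$-exponent to $(n+2i)/2$. Since $d=\gcd(i,n)\le i$, the candidate $\varepsilon$-exponents $(n+d)/2$, $(n+2d)/2$ and $3i/2$ (this last one from the regime $n=2i$ with $i$ even) satisfy
\[(n+d)/2 \le (n+2d)/2 \le (n+2i)/2, \qquad 3i/2 < 2i = (n+2i)/2,\]
with equality in the middle chain if and only if $d=i$, i.e., $i\mid n$. Consequently the only surviving sub-case is $n\ne 2i$, $\gcd(l,p)=p$, $n$ even, and $d=i$; in this regime Theorem \ref{mm} yields
\[N_n(\mathcal{C}_i) = q^n + (-1)^d(q-1)\chi((-1)^{n/2})q^{(n+2i)/2},\]
so maximality and minimality are equivalent to $(-1)^d\chi((-1)^{n/2})=1$ and $-1$, respectively.

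Finally I would translate the structural hypotheses into the stated divisibility conditions. The condition $d=i$ is $i\mid n$; $\gcd(l,p)=p$ with $l=n/i$ amounts to $p\mid n/i$, which combined with $i\mid n$ gives $pi\mid n$ and in particular $p\mid n$; together with $2\mid n$ this is $2p\mid n$. Conversely, from $\Tr(\lambda)=0$, $2p\mid n$ and $i\mid n$ one recovers $n$ even with $d=i$, and the degenerate regime $n=2i$ is automatically excluded because there $l=2$ and so $\gcd(l,p)=1\ne p$, landing in the required sub-case. The main technical obstacle I expect is the exponent comparison carried out above, especially confirming that the $n=2i$ formulas fall strictly short of the Weil exponent; once this elimination is done, the rest is bookkeeping with the parity of $d$ and the quadratic character $\chi$.
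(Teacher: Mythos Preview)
Your approach is exactly the paper's: read off extremality directly from Theorem \ref{mm} against the Hasse--Weil bound $(q-1)q^{(n+2i)/2}$. The necessity direction is correctly argued---the factor $q-1$ forces an $\varepsilon$-sub-case with $\Tr(\lambda)=0$, and the exponent comparison singles out $n\ne 2i$, $\gcd(l,p)=p$, $n$ even, $d=i$.

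The sufficiency direction, however, has a real gap. From $\Tr(\lambda)=0$, $2p\mid n$, $i\mid n$ you deduce $d=i$ and $n$ even, but then claim to be ``landing in the required sub-case'' $\gcd(l,p)=p$ without justification; your exclusion of $n=2i$ likewise leans on this unproven fact. The hypothesis $p\mid n$ does \emph{not} imply $p\mid l=n/i$ once $p\mid i$. Concretely, take $p=3$, $n=30$, $i=6$: then $2p\mid n$ and $i\mid n$, yet $l=5$ with $\gcd(l,p)=1$, so Theorem \ref{mm} places us in the sub-case $\gcd(l,p)=1$, $n+d$ even, with correction exponent $(n+d)/2=18<21=(n+2i)/2$; the curve is neither maximal nor minimal even though the stated hypotheses (including the sign condition $(-1)^d\chi((-1)^{n/2})=\chi(-1)\in\{\pm1\}$) hold. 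In fact your forward analysis already isolates the correct divisibility condition, namely $p\mid l$ (equivalently $pi\mid n$), which together with $n$ even is what should replace ``$2p\mid n$, $i\mid n$''; the two formulations coincide when $p\nmid i$ but not otherwise. With the corrected hypothesis $p\mid l$ the exclusion of $n=2i$ is immediate (there $l=2$, forcing $p=2$), and the converse goes through.
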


\section{The number of affine rational points of the hypersufarce $y^q-y = \sum_{j=1}^r a_jx_j(x_j^{q^{i_j}}-x_j)-\lambda$}

Using the results derived in the previous section, we can now determine the number  of affine rational points of   Artin-Schreier's hypersurface $\mathcal H_r$  defined by the equation:
 
$$\mathcal H_r : y^q-y = \sum_{j=1}^r a_jx_j(x_j^{q^{i_j}}-x_j)-\lambda,$$
where $a_j \in \fq^*$ and $0<i_j<n$ for $j\in\{1,\dots,r\}$.  According to Theorem $5.4$ in \cite{LiNi}, we have the following result
 $$\sum_{c\in\fqn} \psi(uc) =\begin{cases}
0 & \text{ if } u \neq 0;\\
q^n & \text{ if } u = 0.
\end{cases}$$ 
Using this result we can compute the number $N_{n}(\mathcal{H}_r)$ as follows

 \begin{align}\label{AA1} \nonumber
 q^n N_{n}(\mathcal H_r) & = \sum_{c \in \fqn} \sum_{x_1 \in \fqn} \cdots \sum_{x_r\in \fqn} \sum_{y\in \fqn} \psi\left(c\left( \sum_{j=1}^r a_jx_j(x_j^{q^{i_j}}-x_j)-y^q+y-\lambda\right)\right)\\ \nonumber
& =q^{(r+1)n}  + \sum_{c \in \fqn^*} \sum_{x_1 \in \fqn} \cdots \sum_{x_r\in \fqn} \psi\left(c\left( \sum_{i=1}^r a_jx_j(x_j^{q^{i_j}}-x_j)-\lambda\right)\right) \sum_{y\in \fqn} \psi\left(c\left( -y^q+y\right)\right) \\ \nonumber
& = q^{(r+1)n}  + \sum_{c \in \fqn^*} \psi(-c\lambda) \prod_{j=1}^r  \sum_{x_j \in \fqn}\psi\left(c\left(  a_jx_j(x_j^{q^{i_j}}-x_j)\right)\right) \sum_{y\in \fqn} \psi\left(c\left( -y^q+y\right)\right) \\ \nonumber 
& = q^{(r+1)n}  + \sum_{c \in \fqn^*} \psi(-c\lambda) \prod_{j=1}^r \sum_{x_j \in \fqn}\psi\left(c\left( a_j x_j(x_j^{q^{i_j}}-x_j)\right)\right) \sum_{y\in \fqn} \psi\left(y\left( -c^{q^{n-1}}+c\right)\right).\\
 \end{align}

We observe that $$\displaystyle \sum_{y\in \fqn} \psi\left(y\left( -c^{q^{n-1}}+c\right)\right) = \begin{cases}
q^n&  \text{ if } c^{q^{n-1}}-c =0;\\
0 & \text{otherwise}.\\
\end{cases}$$
Since $c^{q^{n-1}}-c = 0$ if and only if $c \in \mathbb{F}_{q^{n-1}}$, we can conclude that the inner sum in \eqref{AA1} has non-null terms only if $c \in \mathbb{F}_q$. Therefore, we have that
\begin{align} \label{AA2} 
N_{n}(\mathcal H_r) = q^{rn} + \sum_{c \in \fq^*} \tilde\psi(-c\Tr(\lambda))\prod_{j=1}^r\sum_{x_j \in \fqn} \psi\left(ca\left(  x_j(x_j^{q^{i_j}}-x_j)\right)\right) .
\end{align} 
The following theorem gives  explicit formulas for $N_n(\mathcal H_r)$. 
\begin{theorem}\label{th1}
Let $i_1, \dots, i_r$ be positive integers such that $0<i_j<n.$ We define $d_j = \gcd(i_j, n)$ and $l_j = \frac{n}{d_j}$. Let $X = \{l_1, \dots, l_{\tilde{r}}\}$ and $Y =\{l_{\tilde{r} + 1}, \dots, l_r\}$, where  $\gcd(l_k, p) = 1$ for $l_k \in X$ and $\gcd(l_k, p) = p$ for $l_k \in Y$.

Let $\mathcal{H}_r$ be  the hypersurface given by the expression

$$ y^q-y = \sum_{j=1}^r a_jx_j(x_j^{q^{i_j}}-x_j)-\lambda,$$
where $\lambda \in \fqn$ and $a_j \in \fq$. Let us define

$$D_1=\sum_{j=1}^{\tilde r} d_j, \quad D_2=\sum_{j=\tilde r+1}^r d_j, \quad  L_1 = \prod_{j=1}^{\tilde r} l_j^{d_j}.\quad A_1=\prod_{j=1}^{\tilde r} a_j^{n-d_j} \quad \text{and } A_2=\prod_{\tilde r}^r a_j ^n.$$

 Let 
$A=A_1A_2$.  In the case $X = \varnothing$ we set $L_1=1$, $A_1=1$, 
and in the case $Y = \varnothing$ we set $A_2=1$.  The number $N_n(\mathcal{H}_r)$ of affine rational points of $\mathcal{H}_r$ in $\mathbb{F}_q^{r+1}$ is given by

\[N_{n}(\mathcal H_r) = \begin{cases}
q^{rn} + (-1)^{D_2}\tau^{s(nr-D_1-2D_2)} \chi((-1)^{D_2}AL_1)\varepsilon_{\Tr(\lambda)}q^{\frac{nr+D_1+2D_2}2}& \text{ if } nr-D_1 \text{ is  even};\\
q^{rn} +(-1)^{D_2+1}\tau^{s(nr-D_1-2D_2+1)} \chi(2(-1)^{D_2}AL_1\Tr(\lambda)) q^{\frac{nr+D_1+2D_2+1}2}  & \text{ if }  nr-D_1 \text{ is odd}.\\

\end{cases}\]
\end{theorem}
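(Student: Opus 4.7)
The plan is to start from the identity
\[N_n(\mathcal H_r) = q^{rn} + \sum_{c \in \fq^*} \tilde\psi(-c\Tr(\lambda)) \prod_{j=1}^r S_j(c), \qquad S_j(c) := \sum_{x \in \fqn} \psi(c a_j x(x^{q^{i_j}} - x)),\]
derived in \eqref{AA2}, and evaluate each $S_j(c)$ as a quadratic-form character sum. For fixed $c\in\fq^*$ the bilinear-form matrix of $c a_j x(x^{q^{i_j}}-x)$ in the basis $\mathcal B$ is $\tfrac{c a_j}{2} B M_{n,i_j} B^T$, whose rank equals the rank $\ell_j$ of $M_{n,i_j}$ and whose reduced matrix has determinant $(c a_j)^{\ell_j}$ times that appearing in Corollary \ref{b1}. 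Lemma \ref{soma} then produces
\[S_j(c) = (-1)^{\ell_j(s+1)} \tau^{\ell_j s} \chi((c a_j)^{\ell_j})\, \chi(\det A_j')\, q^{n-\ell_j/2},\]
with $\ell_j\in\{n-d_j,\,n-2d_j\}$ and $\chi(\det A_j')$ explicit in the two cases of Corollary \ref{b1}.

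Next, I would multiply the $r$ factors and isolate the dependence on $c$. Using $\chi(x^{2k})=1$ for $x \in \fq^*$, the product $\prod_j \chi((ca_j)^{\ell_j})$ collapses to $\chi(c)^{rn-D_1-2D_2}\chi(A)$; the exponent $n$ appearing in $A_2$ (rather than $n-2d_j$) arises precisely from $\chi(a_j^{2d_j}) = 1$. The same absorption simplifies the $c$-free $\chi$-parts to $\chi(-1)^{rn-D_1-D_2}\chi(2)^{rn-D_1}\chi(L_1)$, while the real signs $(-1)^{n-d_j}$ from Corollary \ref{b1} multiply to $(-1)^{rn-D_1-D_2}$. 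The powers of $q$ combine into $q^{(rn+D_1+2D_2)/2}$ and the $(-1)^{\ell_j(s+1)}\tau^{\ell_j s}$ factors accumulate into $(-1)^{(s+1)(rn-D_1-2D_2)}\tau^{s(rn-D_1-2D_2)}$. After this step all the $c$-dependence is packaged into the single factor $\chi(c)^{rn-D_1-2D_2}$.

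This reduces matters to evaluating
\[\sigma := \sum_{c \in \fq^*} \tilde\psi(-c\Tr(\lambda))\,\chi(c)^{rn - D_1 - 2 D_2}.\]
Since $\chi$ has order two, the exponent may be taken modulo $2$, giving $rn - D_1$. If $rn - D_1$ is even, orthogonality of $\tilde\psi$ gives $\sigma = \varepsilon_{\Tr(\lambda)}$. If $rn - D_1$ is odd, for $\Tr(\lambda) \neq 0$ the substitution $c \mapsto -c/\Tr(\lambda)$ identifies $\sigma$ with $\chi(-\Tr(\lambda)) G(\tilde\psi, \chi) = (-1)^{s+1}\tau^s \chi(-\Tr(\lambda)) q^{1/2}$ by Theorem \ref{gaussdois}; for $\Tr(\lambda) = 0$ the sum vanishes, consistent with the convention $\chi(0)=0$ in the stated formula.

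Plugging $\sigma$ back in and using parity identities then produces the two cases of the theorem. In the even case $(-1)^{rn-D_1-D_2} = (-1)^{D_2}$, $\chi(2)^{rn-D_1} = 1$, and $\chi(-1)^{rn-D_1-D_2} = \chi(-1)^{D_2}$ combine to give the coefficient $(-1)^{D_2}\tau^{s(rn-D_1-2D_2)}\chi((-1)^{D_2}AL_1)\varepsilon_{\Tr(\lambda)}$. In the odd case the extra $q^{1/2}$ and $(-1)^{s+1}\tau^s$ from the Gauss sum bump the $q$-exponent by $1/2$ and the $\tau$-exponent by $s$, while the identity $\chi(-1)^{1-D_2}\chi(2)\chi(-\Tr(\lambda)) = \chi(-1)^{D_2}\chi(2\Tr(\lambda))$ rewrites the final argument as $2(-1)^{D_2}AL_1\Tr(\lambda)$. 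The main obstacle is this last consolidation: systematically tracking the signs $(-1)^{\ell_j(s+1)}$ and $(-1)^{n-d_j}$, the $\tau^{\ell_j s}$ factors and the $\chi$-arguments across the disjoint families $X$ and $Y$, and showing that the $s$-dependent signs $(-1)^{s+1}$ cancel (they appear twice in Case B and zero times in Case A) so that the remaining data reduces to the compact expressions of the theorem.
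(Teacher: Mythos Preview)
Your proposal is correct and follows essentially the same route as the paper: start from \eqref{AA2}, evaluate each inner sum $S_j(c)$ via Lemma~\ref{soma} together with the rank/determinant data of Corollary~\ref{b1}, isolate the $c$-dependence as a power of $\chi(c)$, and finish with orthogonality or the Gauss sum of Theorem~\ref{gaussdois} according to the parity of $nr-D_1$. The only cosmetic difference is that the paper first groups the factors into the $X$- and $Y$-blocks and simplifies each block separately before multiplying, whereas you treat all $r$ factors uniformly and reduce exponents modulo~$2$ at the end; the bookkeeping of signs, $\tau$-powers and $\chi$-arguments you outline matches the paper's computation line for line.
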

\begin{proof}
From  \eqref{AA2} we obtain that 
\begin{equation}\label{jp}
N_{n}(\mathcal H_r) = q^{rn} + \sum_{c \in \fq^*} \tilde\psi(-c\Tr(\lambda))\prod_{j=1}^r\sum_{x_j \in \fqn} \psi\left(ca_j\left(  x_j(x_j^{q^{i_j}}-x_j)\right)\right).
\end{equation} 
We can divide this product into two parts: one from $1$ to $\tilde{r}$ and the other from $\tilde{r}+1$ to $r$. Using Lemma \ref{soma} and Corollary \ref{b1}, we can obtain the following expressions

{\small
\begin{align*}
\prod_{j=1}^{\tilde r }\sum_{x_j \in \fqn} \psi\left(ca_j\left(  x_j(x_j^{q^{i_j}}-x_j)\right)\right) & =\prod_{j=1}^{\tilde r } (-1)^{(s+1)(n-d_j)}\tau^{s(n-d_j)}\left((-1)^{n-d_j}\chi((-2ca_j)^{n-d_j}l_j^{d_j}\right)q^{n-(n-d_j)/2}\\
& = (-1)^{s(n\tilde r-D_1)}\tau^{s(n \tilde r-D_1)}\chi((-2c)^{n \tilde r-D_1}A_1L_1)q^{\frac{n\tilde r +D_1}2}
\end{align*}}
and 
{\scriptsize
\begin{align*}
\prod_{j=\tilde r +1}^{r }\sum_{x_j \in \fqn} \psi\left(ca_j\left(  x_j(x_j^{q^{i_j}}-x_j)\right)\right) & =\prod_{j=\tilde r +1}^r (-1)^{(s+1)(n-2d_j)}\tau^{s(n-2d_j)}\left((-1)^{n-d_j}\chi((-1)^{n-d_j}(2ca_j)^{n-2d_j}\right)q^{n-(n-2d_j)/2}\\
& = (-1)^{sn(r-\tilde r)-D_2}\tau^{sn(r- \tilde r)-2sD_2}\chi((-1)^{n(r-\tilde r)-D_2}(2c)^{n(r-\tilde r)}A_2)q^{\frac{n(r-\tilde r)+2D_2}2}.
\end{align*}}
Using the given expressions in  \eqref{jp}, we  obtain the following result
{\small
\begin{align} \label{mr}
N_{n}(\mathcal H_r)- q^{rn} & = (-1)^{s(nr -D_1)-D_2}\tau^{s(nr-D_1-2D_2)} \sum_{c \in \fq^*} \tilde\psi(-c\Tr(\lambda))\chi((-1)^{D_2}(-2c)^{nr-D_1}AL_1)q^{\frac{nr+D_1+2D_2}2}
\end{align}
}
We will now proceed with the proof by considering two cases.

\begin{enumerate}
\item  If $nr-D_1$ is even, then $\chi((-2c)^{nr-D_1})=1$. Substituting this in  \eqref{mr}, we have

\begin{align*}
N_{n}(\mathcal H_r)- q^{rn} & = (-1)^{D_2}\tau^{s(nr-D_1-2D_2)} \chi((-1)^{D_2}AL_1)q^{\frac{nr+D_1+2D_2}2}\sum_{c \in \fq^*} \tilde\psi(-c\Tr(\lambda))\\
& =\begin{cases} (-1)^{D_2}\tau^{s(nr-D_1-2D_2)} \chi((-1)^{D_2}AL_1)q^{\frac{nr+D_1+2D_2}2}(q-1) & \text{ if } \Tr(\lambda) = 0\\
(-1)^{D_2}\tau^{s(nr-D_1-2D_2)} \chi((-1)^{D_2}AL_1^{D_1})q^{\frac{2nr-D_1-2D_2}2}(-1) & \text{ if } \Tr(\lambda) \neq 0
\end{cases}\\
& = (-1)^{D_2}\tau^{s(nr-D_1-2D_2)} \chi((-1)^{D_2}AL_1)\varepsilon_{\Tr(\lambda)}q^{\frac{nr+D_1+2D_2}2}.
\end{align*}

\item If $nr-D_1$ is odd, then $\chi((-2c)^{nr-D_1})=\chi(-2c)$. Substituting this in  \eqref{mr}, we have
{\footnotesize
\begin{align*}
N_{n}(\mathcal H_r)- q^{rn} & = (-1)^{s-D_2}\tau^{s(nr-D_1-2D_2)} \chi(2(-1)^{D_2+1}AL_1)q^{\frac{nr+D_1+2D_2}2}\sum_{c \in \fq^*} \tilde\psi(-c\Tr(\lambda))\chi(c)\\
& =\begin{cases} (-1)^{s-D_2}\tau^{s(nr-D_1-2D_2)} \chi(2(-1)^{D_2+1}AL_1)q^{\frac{nr+D_1+2D_2}2}\displaystyle\sum_{c\in \fq^*}\chi(c) & \text{ if } \Tr(\lambda) = 0\\
(-1)^{s-D_2}\tau^{s(nr-D_1-2D_2)} \chi(2(-1)^{D_2}AL_1\Tr(\lambda))q^{\frac{nr+D_1+2D_2}2}\displaystyle\sum_{c\in \fq^*} \tilde \psi(-c\Tr(\lambda))\chi(-c\Tr(\lambda))& \text{ if } \Tr(\lambda) \neq 0
\end{cases}\\
& =\begin{cases}0 & \text{ if } \Tr(\lambda) = 0;\\
(-1)^{s-D_2}\tau^{s(nr-D_1-2D_2)} \chi(2(-1)^{D_2}AL_1\Tr(\lambda))q^{\frac{nr+D_1+2D_2}2}G_1(\tilde \psi, \chi)& \text{ if } \Tr(\lambda) \neq 0.
\end{cases}\\
\end{align*}
}
Applying Theorem \ref{gaussdois}, which states that $G_1(\tilde \psi, \chi) =-(-\tau)^s\sqrt{q}$, we obtain the following expression
\[N_{n}(\mathcal H_r)- q^{rn}  = (-1)^{D_2+1}\tau^{s(nr-D_1-2D_2+1)} \chi(2(-1)^{D_2}AL_1\Tr(\lambda)) q^{\frac{nr+D_1+2D_2+1}2}.\]
\end{enumerate}

\end{proof}

The well-known Weil bound tells us that
\[\left|N_n(\mathcal H_r) - q^{nr}\right|\le (q-1) \prod_{j=1}^r q^{i_j} q^{nr/2} = (q-1)q^{\frac{rn+2I}2},\]
where $I = \sum_{j=1}^r i_j. $  By Theorem \ref{th1} this bound can be attend if and only if  in the case $nr-D_1$ is even and $\Tr(\lambda)=0$. Besides that, we would need that $2I = D_1+2D_2$, that only occurs if $D_1=0$ and $i_{j}= d_{j}$ for $j\in Y$.
Using this fact, we obtain the following result, that assures us when the hypersurface $\mathcal H_r$ is maximal or minimal.

\begin{theorem}\label{HW}
Let $\mathcal H_r: y^q-y = \displaystyle\sum_{j=1}^r a_jx_j(x_j^{q^{i_j}}-x_j)-\lambda$ with $\lambda \in \fqn$, $a_j \in \fq^*$ and $0<i_j<n$ and satisfying the same conditions  as in Theorem \ref{th1}. 

The hypersurface $\mathcal H_r$ attains the upper Weil bound if and only if one of the following holds
\begin{itemize}
\item $\Tr(\lambda) =0$, $D_1=0$, $nr$ is even, $i_j = d_j$ for all $j \in Y$,  $(nr-2D_2)s \equiv 0 \pmod 4$ and $(-1)^{D_2}\chi((-1)^{D_2}AL_1) =1;$
\item  $\Tr(\lambda) =0$, $D_1=0$, $nr$ is even, $i_j = d_j$ for all $j \in Y$ , $(nr-2D_2)s \equiv 2 \pmod 4$ and $(-1)^{D_2}\chi((-1)^{D_2}AL_1) =-1.$
\end{itemize}

The hypersurface $\mathcal H_r$ attains the lower bound if and only if one of the following holds
\begin{itemize}
\item $\Tr(\lambda) =0$, $D_1=0$, $nr$ is even, $i_j = d_j$ for all $j \in Y$,  $(nr-2D_2)s \equiv 2 \pmod 4$ and $(-1)^{D_2}\chi((-1)^{D_2}AL_1) =1;$
\item  $\Tr(\lambda) =0$, $D_1=0$, $nr$ is even, $i_j = d_j$ for all $j \in Y$ , $(nr-2D_2)s \equiv 0 \pmod 4$ and $(-1)^{D_2}\chi((-1)^{D_2}AL_1) =-1.$
\end{itemize}

\end{theorem}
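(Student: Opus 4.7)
The plan is to compare the explicit formula for $N_n(\mathcal H_r)$ from Theorem \ref{th1} with the Weil estimate $(q-1)q^{(nr+2I)/2}$ and to read off when equality holds. The first reduction is driven by the observation that whenever the formula for $N_n(\mathcal H_r)$ does not contain the factor $\varepsilon_{\Tr(\lambda)}=q-1$, the deviation $|N_n(\mathcal H_r)-q^{nr}|$ is a pure (half-integral) power of $q$, and hence cannot equal $(q-1)q^{(nr+2I)/2}$ because $q-1$ is not a power of $q$ when $q=p^s\ge 3$. This at once rules out the case $nr-D_1$ odd with $\Tr(\lambda)\ne 0$ (where $|N_n(\mathcal H_r)-q^{nr}|=q^{(nr+D_1+2D_2+1)/2}$) and the case $nr-D_1$ even with $\Tr(\lambda)\ne 0$ (where $|N_n(\mathcal H_r)-q^{nr}|=q^{(nr+D_1+2D_2)/2}$). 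In the subcase $nr-D_1$ odd with $\Tr(\lambda)=0$ the formula degenerates to $N_n(\mathcal H_r)=q^{nr}$, strictly inside the Weil interval. Hence attainment forces $nr-D_1$ even and $\Tr(\lambda)=0$.

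Under these hypotheses Theorem \ref{th1} yields
$$N_n(\mathcal H_r)-q^{nr}=\sigma\,(q-1)\,q^{(nr+D_1+2D_2)/2},\quad \sigma=(-1)^{D_2}\tau^{s(nr-D_1-2D_2)}\chi((-1)^{D_2}AL_1).$$
Matching the exponent of $q$ with the Weil exponent $(nr+2I)/2$ forces $D_1+2D_2=2I$. Since $d_j=\gcd(i_j,n)$ divides $i_j$ we have $i_j\ge d_j$, so $2I\ge 2D_1+2D_2$, with equality iff $D_1=0$ (equivalently $X=\varnothing$) and $i_j=d_j$ for every $j\in Y$. Combined with the parity condition $nr-D_1=nr$ even, these are precisely the four ``geometric'' conditions common to both items of the theorem, and under them $\fqn$-maximality corresponds exactly to $\sigma=+1$ while $\fqn$-minimality corresponds to $\sigma=-1$.

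It remains to translate the condition on $\sigma=(-1)^{D_2}\tau^{s(nr-2D_2)}\chi((-1)^{D_2}AL_1)$ into the arithmetic form of the statement. Since $nr-2D_2$ is even, $s(nr-2D_2)$ is even and $\tau^{s(nr-2D_2)}\in\{\pm 1\}$; using $\tau^2=(-1)^{(p-1)/2}$ one verifies that $\tau^{s(nr-2D_2)}=+1$ exactly when $s(nr-2D_2)\equiv 0\pmod 4$ and $\tau^{s(nr-2D_2)}=-1$ exactly when $s(nr-2D_2)\equiv 2\pmod 4$. Setting $K:=(-1)^{D_2}\chi((-1)^{D_2}AL_1)\in\{\pm 1\}$, the factorisation $\sigma=\tau^{s(nr-2D_2)}\cdot K$ turns $\sigma=+1$ into the disjunction ``$s(nr-2D_2)\equiv 0\pmod 4$ and $K=+1$'' or ``$s(nr-2D_2)\equiv 2\pmod 4$ and $K=-1$'', which is exactly the stated characterization of maximality; the analogous split of $\sigma=-1$ yields the characterization of minimality. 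The main subtlety lies in this final sign bookkeeping, since $\tau^{s(nr-2D_2)}$ depends simultaneously on $p\bmod 4$ and $s(nr-2D_2)\bmod 4$; the remaining steps amount to a direct substitution into Theorem \ref{th1} combined with a magnitude comparison.
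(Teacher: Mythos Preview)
Your overall strategy matches the paper's: the paper's own argument is just the short paragraph preceding the theorem (compare the explicit formula of Theorem~\ref{th1} with the Weil bound, force $nr-D_1$ even and $\Tr(\lambda)=0$, then match exponents to get $D_1=0$ and $i_j=d_j$ for $j\in Y$), and your first two paragraphs carry this out correctly and in more detail than the paper does. In particular, your elimination of the cases $nr-D_1$ odd and $\Tr(\lambda)\ne 0$ via the ``$q-1$ is not a power of $q$'' observation is clean, and your derivation of $D_1=0$, $i_j=d_j$ from $2I=D_1+2D_2$ is correct.

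The gap is in the last paragraph, in the sign bookkeeping for $\tau^{s(nr-2D_2)}$. Your claim that ``$\tau^{s(nr-2D_2)}=+1$ exactly when $s(nr-2D_2)\equiv 0\pmod 4$ and $\tau^{s(nr-2D_2)}=-1$ exactly when $s(nr-2D_2)\equiv 2\pmod 4$'' is only valid when $p\equiv 3\pmod 4$ (i.e.\ $\tau=i$). When $p\equiv 1\pmod 4$ one has $\tau=1$, hence $\tau^{s(nr-2D_2)}=1$ for \emph{every} value of $s(nr-2D_2)\pmod 4$; in that regime $\sigma=K$ and maximality is simply $K=+1$, which is \emph{not} captured by the disjunction ``$s(nr-2D_2)\equiv 0\pmod 4$ and $K=+1$'' or ``$s(nr-2D_2)\equiv 2\pmod 4$ and $K=-1$'' (take $p\equiv 1\pmod 4$, $s(nr-2D_2)\equiv 2\pmod 4$, $K=+1$: then $\sigma=+1$ but neither clause holds). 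So the step where you pass from $\sigma=\pm1$ to the stated $\pmod 4$ dichotomy does not go through in general; the correct equivalence should read $\tau^{s(nr-2D_2)}=+1$ iff $p\equiv 1\pmod 4$ \emph{or} $s(nr-2D_2)\equiv 0\pmod 4$. The paper itself does not spell out this final sign analysis, so your write-up is actually more explicit than the original---but this particular assertion needs to be corrected before the argument is complete.
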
 
\begin{example}
Let $q=5^2$, $n=30$. We consider the Artin-Schreier hypersurface given by
\[\mathcal H: y^q-y = x_1(x_1^{q^2}-x_1)+x_2(x_2^{q^3}-x_2)+x_3(x_3^{q^6}-x_3).\]
Following the notation of Theorem \ref{HW}, we have that $i_1=d_1=2, i_2=d_2=3, i_3=d_3=6$, $l_1=15, l_2=10, l_3=5$, $D_2 = 2+3+6 = 11$, $A =1$  and $L_1=1$. Moreover,
\[(nr-2D_2)s\equiv (90-22)\cdot 2 \equiv  0 \pmod 4,\]
\[(-1)^{D_2}\chi((-1)^{D_2}AL_1) =  (-1)^{11} \chi((-1)^{11}) = -1.\]

It follows from Theorem \ref{HW} that $\mathcal H$ is $\mathbb F_{q^{30}}$-minimal. 
\end{example}

\end{document}